
\documentclass[12pt, leqno]{amsart}

\usepackage{shuffle} 

\setlength{\textwidth}{16cm} \setlength{\textheight}{18cm}
\setlength{\oddsidemargin}{0.0cm} \setlength{\evensidemargin}{0.0cm}

\usepackage{lscape}
\usepackage{graphicx}
\usepackage{amssymb,amsmath,amsthm,amscd}
\usepackage{mathrsfs}
\usepackage{enumerate}
\usepackage[usenames,dvipsnames]{color}
\usepackage[colorlinks=true, pdfstartview=FitV,
 linkcolor=blue,citecolor=blue,urlcolor=blue]{hyperref}

\usepackage[all]{xy}
\usepackage[normalem]{ulem}  

\usepackage{enumitem}

\allowdisplaybreaks[2]
\usepackage{oldgerm}
\usepackage{xspace}

\usepackage{marginnote}

\newcommand{\nc}{\newcommand}
\numberwithin{equation}{section}



\newenvironment{red}{\relax\color{red}}{\relax}
\newenvironment{blue}{\relax\color{Dandelion}}{\hspace*{.5ex}\relax}


\newcommand{\beb}{\begin{blue}}
\newcommand{\eb}{\end{blue}}

\newcommand{\ber}{\begin{red}}
\newcommand{\er}{\end{red}}

\newcommand{\berE}[1]{\begin{red}{}\marginnote{\fbox{\scshape\lowercase{E}}}%
#1}  

\renewcommand{\le}{\leqslant}
\renewcommand{\ge}{\geqslant}

\theoremstyle{plain}
\newtheorem{lemma}{Lemma}[section]
\newtheorem{prop}[lemma]{Proposition}
\newtheorem{theorem}[lemma]{Theorem}

\newcommand{\Prop}{\begin{prop}}
\newcommand{\enprop}{\end{prop}}
\newcommand{\Lemma}{\begin{lemma}}
\newcommand{\enlemma}{\end{lemma}}
\newcommand{\Th}{\begin{theorem}}
\newcommand{\enth}{\end{theorem}}
\newtheorem{corollary}[lemma]{Corollary}
\newcommand{\Cor}{\begin{corollary}}
\newcommand{\encor}{\end{corollary}}
\newtheorem{definition}[lemma]{Definition}
\newtheorem{conjecture}{Conjecture}
\newcommand{\Def}{\begin{definition}}
\newcommand{\edf}{\end{definition}}
\newtheorem{sublemma}[lemma]{Sublemma}
\newcommand{\Sublemma}{\begin{sublemma}}
\newcommand{\ensub}{\end{sublemma}}

\theoremstyle{definition}
\newtheorem{remark}[lemma]{Remark}
\newtheorem{example}[lemma]{Example}
\newtheorem{Convention}[lemma]{Convention}
\newcommand{\Conv}{\begin{Convention}}
\newcommand{\enconv}{\end{Convention}}
\nc{\Conj}{\begin{conjecture}}
\nc{\enconj}{\end{conjecture}}
\nc{\Rem}{\begin{remark}}
\nc{\enrem}{\end{remark}}

\newcommand{\Q}{\mathbb {Q}}

\newcommand{\Z}{{\mathbb Z}}
\newcommand{\A}{{\mathbb A}}

\newcommand{\D}{\mathscr{D}}

\newcommand{\one}{{\bf{1}}}

\newcommand{\seteq}{\mathbin{:=}}

\newcommand{\hd}{{\operatorname{hd}}}

\newcommand{\g}{{\mathfrak{g}}}

\newcommand{\Hom}{\operatorname{Hom}}


\newcommand{\M}{{\mathscr M}}

\newcommand{\hs}{\hspace*}
\newcommand{\ms}{\mspace}

\newcommand{\To}[1][{\hs{2ex}}]{\xrightarrow{\,#1\,}}

\newenvironment{myequation}
{\relax\setlength{\arraycolsep}{1pt}\begin{eqnarray}}
{\end{eqnarray}}
\newenvironment{myequationn}
{\relax\setlength{\arraycolsep}{1pt}\begin{eqnarray*}}
{\end{eqnarray*}}

\nc{\eq}{\begin{myequation}}
\nc{\eneq}{\end{myequation}}
\nc{\eqn}{\begin{myequationn}}
\nc{\eneqn}{\end{myequationn}}

\newcommand{\on}{\operatorname}

\newcommand{\bni}{\be[label=\rm(\roman*)]}
\newcommand{\bnum}{\bni}
\newcommand{\bna}{\be[label=\rm(\alph*)]}

\newcommand{\id}{\on{id}}
\newcommand{\ba}{\begin{array}}
\newcommand{\ea}{\end{array}}

\newcommand{\eqsub}{\begin{subequations}\begin{eqnarray}}
\newcommand{\eneqsub}{\end{eqnarray}\end{subequations}}

\newcommand{\ol}{\overline}

\nc{\la}{\lambda}
\nc{\lam}{\lambda}
\nc{\U}[1][\g]{U_q(#1)}
\nc{\te}{\tilde{e}}
\nc{\tem}{\tilde{e}^{\mathrm{max}}}
\nc{\tei}{\tilde{e}_i}
\nc{\tf}{\tilde{f}}
\nc{\tfm}{\tilde{f}^{\mathrm{max}}}
\nc{\tfi}{\tilde{f}_i}
\nc{\tU}{\widetilde U_q(\g)}
\nc{\tE}{\widetilde{E}}
\nc{\tF}{\widetilde{F}}
\nc{\tK}{\widetilde{K}}
\nc{\tEs}{\widetilde{E}^*}
\nc{\tFs}{\widetilde{F}^*}

\nc{\ttE}{\widetilde{\mathcal{E}}}
\nc{\ttF}{\widetilde{\mathcal{F}}}
\nc{\ttEs}{\ttE^*}
\nc{\ttFs}{\ttF^*}
\nc{\tfs}{\tf^*}
\nc{\tfss}[1]{\tf^{* \hskip 0.05em #1}}
\nc{\tess}[1]{\te^{* \hskip 0.05em #1}}
\nc{\tes}{\te^*}
\nc{\tesm}{\tilde{e}^{* \hskip 0.05em \mathrm{max}}}
\nc{\tfsm}{\tilde{f}^{* \hskip 0.05em \mathrm{max}}}

\nc{\tk}{\tilde{k}}
\nc{\tkone}{\tk_{\ol{1}}}
\nc{\teone}{\tilde{e}_{\ol{1}}}
\nc{\tfone}{\tilde{f}_{\ol{1}}}

\nc{\teibar}{\tilde{e}_{\ol{i}}} \nc{\tfibar}{\tilde{f}_{\ol{i}}}
\nc{\tki}{{\tk}_{\ol {i}}}

\nc{\BZ}{{\mathbb{Z}}}
\nc{\al}{\alpha}
\nc{\tal}{\widetilde{\al}}
\nc{\tch}{\widetilde{h}}
\nc{\qs}{{q}}
\nc{\lan}{\langle}
\nc{\ran}{\rangle}
\nc{\re}{{\mathrm{re}}}
\nc{\wt}{\operatorname{wt}}
\nc{\hwt}{\widehat{\wt}}
\nc{\Ht}{\mathrm{ht}}
\nc{\hHt}{\widehat{\Ht}}
\nc{\ch}{\operatorname{ch}}
\nc{\Um}[1][\g]{U^-_q(#1)}
\nc{\Ue}{U^+_q(\g)}
\nc{\ep}{\varepsilon}
\nc{\hep}{\widehat{\ep}}
\nc{\vphi}{\varphi}
\nc{\sphi}{\varphi^*}
\nc{\eps}{\ep^*}
\nc{\heps}{\hep^{ \hskip 0.2em  *}}

\nc{\nn}{\nonumber}
\def\max{{\mathop{\mathrm{max}}}}
\nc{\vp}{\varpi}
\nc{\cls}{{\operatorname{cl}}}
\nc{\Wt}{{\operatorname{Wt}}}
\nc{\Us}{U'_q(\g)}
\nc{\La}{\Lambda}
\nc{\tLa}{\widetilde\Lambda}
\nc{\ro}{{\rm(}}
\nc{\rf}{{\rm)}}
\nc{\norm}{{\mathrm{norm}}}
\nc{\qbox}{\quad\mbox}
\nc{\braid}{{\mathfrak{B}}}
\nc{\Ad}{\operatorname{Ad}}
\nc{\Aut}{\operatorname{Aut}}
\nc{\dt}[1]{\tilde{\tilde #1}}
\nc{\Sn}{S^{{\mathrm{norm}}}}
\nc{\aff}{{\mathrm{aff}}}
\nc{\rk}{{\mathrm{rk}}}
\nc{\tP}{\widetilde{P}}
\nc{\tW}{\widetilde{W}}
\nc{\Dyn}{\mathrm{Dyn}}
\nc{\tD}{\widetilde{\Delta}}
\nc{\height}[1]{{\operatorname{ht}}(#1)}
\nc{\bl}{\bigl(}
\nc{\br}{\bigr)}
\nc{\Hecke}{\mathrm{H}}
\nc{\HA}{\Hecke^{\mathrm{A}}}
\nc{\HB}{\Hecke^{\mathrm{B}}}
\newcommand{\scbul}{{\,\raise1pt\hbox{$\scriptscriptstyle\bullet$}\,}}
\nc{\vac}{{\phi}}
\nc{\Bt}{\B_\theta(\g)}
\nc{\be}{\begin{enumerate}}
\nc{\ee}{\end{enumerate}}
\nc{\low}{{\mathrm{low}}}
\nc{\upper}{{\mathrm{up}}}
\nc{\Zodd}{\Z_{\mathrm{odd}}}
\nc{\Ft}[1][n]{\mathbb{P}\mathrm{ol}_{#1}}
\nc{\Ftf}[1][n]{\widetilde{\mathbb{P}\mathrm{ol}}_{#1}}
\nc{\KA}{\on{K}^{\mathrm{A}}}
\nc{\KB}{\on{K}^{\mathrm{B}}}
\nc{\Res}{\on{Res}}
\nc{\Fc}[1][{n,m}]{\mathbf{F}_{#1}}
\nc{\tphi}{\tilde{\varphi}}
\nc{\CO}{\mathscr{O}}
\nc{\inte}{\mathrm{int}}
\nc{\Oint}{\mathcal{O}^{\ge0}_{\inte}}
\nc{\vs}{\vspace*}
\nc{\tLt}{\widetilde{L}}
\nc{\tL}{\widetilde{\Lambda}}
\nc{\tu}{\tilde{u}}
\nc{\noi}{\noindent}
\nc{\heigh}{\mathfrak{t}}
\nc{\lowest}{\mathfrak{l}}
\nc{\rootl}{\mathsf{Q}}
\nc{\rlQ}{\rootl}
\nc{\cl}{\colon}

\nc{\uqpg}{U'_q(\mathfrak g)}
\nc{\uq}{\uqpg}
\nc{\Oh}{\widehat{\mathcal{O}}}
\nc{\pn}{p_{\mathfrak{n}}}

\nc{\KLR}{KLR algebra}
\nc{\KLRs}{KLR algebras}
\nc{\cor}{\mathbf{k}}
\nc{\cora}{{\cor(A)}}
\nc{\haut}{\mathrm{ht}}
\nc{\tens}{\mathop\otimes}
\nc{\gmod}{\mbox{-$\mathrm{gmod}$}}
\nc{\gMod}{\mbox{-$\mathrm{gMod}$}}
\nc{\proj}{\mbox{-$\mathrm{proj}$}}
\nc{\gproj}{\mbox{-$\mathrm{gproj}$}}
\nc{\smod}{\mbox{-$\mathrm{mod}$}}
\nc{\Mod}{\mbox{-$\mathrm{Mod}$}}
\nc{\h}{\mathfrak h}
\nc{\Rnorm}{R^{\mathrm{norm}}}
\nc{\Runiv}{R^{\mathrm{univ}}}
\nc{\Rren}{R^{\mathrm{ren}}}

\nc{\Vhat}{\widehat{V}}
\nc{\F}{\mathcal{F}}

\def\T{{\mathcal T}}

\def\Gup{{\mathrm{G^{up}}}}
\def\Glo{{\mathrm{G^{low}}}}
\nc{\fd}[1][A]{\on{\mathrm{flat.dim}_{#1}}}
\nc{\bP}{{\mathbb{P}}}
\nc{\bPh}{\widehat{\mathbb{P}}}
\nc{\bK}[1][{n}]{\widehat{\mathbb{K}}_{#1}}
\nc{\bV}[1][{n}]{\widehat{V}^{\otimes{#1}}}
\nc{\bVK}[1][{n}]{\widehat{V}^{\otimes{#1}}_{\widehat{\mathbb{K}}}}
\nc{\hV}{\widehat{V}}
\nc{\opp}{\mathrm{opp}}
\nc{\col}{\colon}
\nc{\oep}{\epsilon}
\nc{\qtext}{\quad\text}
\nc{\qtextq}[1]{\quad\text{#1}\quad}
\nc{\longtwoheadrightarrow}[1][]{\xymatrix{\ar@{->>}[r]^-{{#1}}&}}
\nc{\epiTo}[1][]{\longtwoheadrightarrow[{#1}]}
\nc{\epito}{\twoheadrightarrow}
\nc{\monoTo}[1][]{\xymatrix{\ar@{>->}[r]^-{{#1}}&}}
\nc{\sym}{\mathfrak{S}}
\nc{\inp}[1]{{({#1})_{\mathrm{n}}}}
\nc{\rtl}{\rootl}
\nc{\wtd}{\widetilde}
\nc{\etens}{\boxtimes}
\nc{\ds}[1]{\mathrm{d}(#1)}
\nc{\rmat}[1]{{\mathbf{r}}_%
{\mspace{-2mu}\raisebox{-.6ex}{${\scriptstyle{#1}}$}}}
\nc{\rmats}[1]{{\mathbf{r}}_%
{\mspace{-2mu}\raisebox{-.6ex}{${\scriptscriptstyle{#1}}$}}}
\nc{\shc}{\mathcal{C}}
\nc{\shs}{\mathcal{S}}
\nc{\Fct}{{\on{Fct}}}
\nc{\tC}{\widetilde{\shc}}
\nc{\Zp}{\Z_{\ge0}}
\nc{\tPhi}{\widetilde{\Phi}}
\nc{\tT}{{\widetilde{\T}}}
\nc{\Ob}{\on{Ob}}
\nc{\bwr}{\mbox{\large$\wr$}}
\nc{\Img}{\on{Im}}
\nc{\Ab}{\mathcal{A}^{\mathrm{big}}}
\nc{\Sb}{\mathcal{S}^{\mathrm{big}}}
\nc{\As}{\mathcal{A}}
\nc{\Ss}{\mathcal{S}}
\nc{\ntens}{\widetilde{\otimes}}
\nc{\hR}{\widehat{R}}
\nc{\nconv}{\mathop{\mbox{\large $\odot$}}}
\nc{\snconv}{\mbox{\scriptsize$\odot$}}
\nc{\ts}{\tilde{s}}
\nc{\sho}{\mathcal{O}}
\nc{\bc}{\begin{cases}}
\nc{\ec}{\end{cases}}
\nc{\slnh}{{\widehat{\mathfrak{sl}}_N}}
\nc{\UA}{U_q'(\slnh)}
\nc{\KR}{R_K}
\nc{\cQ}{\mathcal{Q}}
\nc{\Irr}{\mathcal{I}rr}
\nc{\tQ}{\widetilde{\cQ}}
\nc{\bs}{\mathbf{s}}
\nc{\bL}{\mathbb{L}}
\nc{\tg}{\tilde{g}}

\nc{\conv}{\mathbin{\mbox{\large $\circ$}}}
\nc{\shconv}{\mathbin{\large\diamond}}
\nc{\sconv}{\mathbin{\large\Delta}}
\nc{\stens}{\mathbin{\large\Delta}}
\nc{\hconv}{\mathbin{\nabla}}
\nc{\htens}{\mathbin{\nabla}}


\nc{\Rm}{R^{\mathrm{ren}}}

\nc{\bQ}{\ol{Q}}

\nc{\de}{\on{\textfrak{d}\ms{1mu}}}

\nc{\xmono}{\ar@{>->}}
\nc{\xepi}{\ar@{->>}}
\nc{\db}[1]{\raisebox{-.5ex}[2ex][1.8ex]{$#1$}}
\nc{\wb}[1]{\mbox{$\rule[-1.1ex]{0ex}{2ex}#1$}}
\nc{\univ}{\mathrm{univ}}
\nc{\rM}{{}^*\mspace{-2mu}M}
\nc{\lM}{M^*}
\nc{\uqm}{\uq\smod}
\nc{\tR}{\widetilde{R}_{\gamma,\beta}}
\nc{\tx}{\tilde{x}}
\nc{\bi}{\mathbf{i}}
\nc{\ttau}{\widetilde{\tau}}

\nc{\tEnd}{\on{\widetilde{E}nd}}
\nc{\tHom}{\on{\widetilde{H}om}}

\nc{\K}{{J}}
\nc{\Kex}{{\K}_{\mathrm{ex}}}
\nc{\Kfr}{{\K}_{\mathrm{f\mspace{.01mu}r}}}
\nc{\coro}{\cor}
\nc{\tB}{\widetilde{B}}
\nc{\seed}{\mathscr{S}}

\nc{\up}{\mathrm{up}}
\nc{\bfa}{\mathbf{a}}
\nc{\bfb}{\mathbf{b}}
\nc{\bfc}{\mathbf{c}}
\nc{\bfm}{\mathbf{m}}
\nc{\hbfm}{ \widehat{\mathbf{m}}}

\newlength{\mylength}
\setlength{\mylength}{\textwidth}

\nc{\ov}[1]{\overline{#1}}
\nc{\Wlmj}[3]{\W_{#2,#3}^{(#1)}}
\nc{\Mkl}[2]{\M_\ttww(#1,#2)}
\nc{\mqs}{(-q^2)}
\nc{\Cquiver}{\upsigma}

\nc{\mut}[1]{{\mu}_{\mspace{-2mu}\raisebox{-.5ex}{${\scriptstyle{#1}}$}}}

\nc{\Kt}{\mathcal K_t}
\nc{\KT}{\mathbb{K}_t}
\nc{\yim}{y_{i,m}}
\nc{\yjm}{y_{j,m}}
\nc{\yjp}{y_{j,p}}
\nc{\yimp}{y_{i,m+1}}
\nc{\yjmp}{y_{j,m+1}}

\nc{\Refl}{\mathscr{S}}
\nc{\Reflinv}{{\Refl}^{-1}}

\nc{\Refn}{\mathsf{S}}

\nc{\catC}{\mathscr C}
\nc{\catA}{\mathcal A}
\nc{\shift}{{\mathrm T}}
\nc{\rE}{ \mathsf{E} }
\nc{\rW}{ \mathcal{W} }
\nc{\rES}{ \mathcal{E} }

\nc{\brd}{\sigma} 
\nc{\into}{\xymatrix@C=3ex{{}\ar@{^{(}->}[r]&{}}}
\nc{\dual}{\D\ms{1.5mu}}
\nc{\cdual}{D}
\nc{\cat}[1][{\g}]{\catC_{#1}^0}

\nc{\catCO}{{\catC_\g^0}}
\nc{\catCOD}{{\catC_\g^{0, \ddD}}}
\nc{\catCQ}{{\catC_{\qQ}}}
\nc{\catCQd}{{\catC_{\widetilde{\qQ}}}}
\nc{\catCD}{{\catC_{\ddD}}}
\nc{\catCDK}{{\catC_{\ddD, \iK}}}
\nc{\Li}{{\La^\infty}}
\nc{\sig}{{\sigma(\g)}}
\nc{\sigZ}{{\sigma_0(\g)}}
\nc{\sigQ}{{\sigma_\qQ(\g)}}
\nc{\sigQd}{{\sigma_{\widetilde{\qQ}}(\g)}}
\nc{\phiQd}{\phi_{\widetilde{\qQ}}}
\nc{\sigD}{{\sigma_\ddD(\g)}}

\nc{\ZZ}{{\mathbf{Z}}}
\nc{\sP}{{\mathsf{P}}}
\nc{\sV}{{\mathsf{V}}}
\nc{\rxw}{{\underline{w}}}
\nc{\rxwz}{{\underline{w_0}}}
\nc{\boten}[1]{\overrightarrow{\bigotimes_{#1}}}
\nc{\cmA}{{\mathsf{A}}}
\nc{\cmC}{{\mathsf{C}}}
\nc{\ddD}{{\mathcal{D}}}
\nc{\ddDQ}{{\ddD_Q}}
\nc{\ddDQd}{{\ddD_{\widetilde{Q}}}}
\nc{\qQ}{{\mathcal{Q}}}
\nc{\gf}{{\g_{\mathrm{fin}}}}
\nc{\Df}{{\Delta_{\mathrm{fin}}}}
\nc{\If}{{I_{\mathrm{fin}}}}
\nc{\cmAf}{{\cmA_{\mathrm{fin}}}}
\nc{\weyl}{{\mathsf{W}}}
\nc{\weylf}{{\mathsf{W}_{\mathrm{fin}}}}
\nc{\sg}{{\mathfrak{S}}}
\nc{\weylA}{{\mathsf{W}_\cmA}}
\nc{\weylC}{{\mathsf{W}_\cmC}}
\nc{\Deg}{\mathrm{Deg}}
\nc{\Di}{\Deg^\infty}
\nc{\KRc}{{K_{q=1}(R_\cmC\gmod)}}
\nc{\prD}{{\Delta^+}}
\nc{\prDf}{{\Delta^+_{\mathrm{fin}}}}
\nc{\nrD}{{\Delta^-}}
\nc{\prDA}{{\Delta^+_\cmA}}
\nc{\prDC}{{\Delta^+_\cmC}}
\nc{\nrDC}{{\Delta^-_\cmC}}
\nc{\n}{{\mathfrak{n}}}
\nc{\Rt}{\mathsf{L}} 
\nc{\Cp}{\mathsf{V}} 
\nc{\cuspS}{{\mathsf{S}}}
\nc{\st}[1]{\{{#1}\}}
\nc{\bst}[1]{\bigl\{{#1}\bigr\}}
\nc{\WS}{Quantum affine Schur-Weyl duality\xspace}
\nc{\CWS}{Quantum affine Weyl-Schur duality}
\nc{\zz}{{{\mathbf{z}}}}
\nc{\wlP}{\mathsf{P}}
\nc{\twlP}{\widetilde{\wlP}}
\nc{\wl}{\wlP}
\nc{\clp}{{\mathrm{cl}}}
\nc{\wlPc}{{\wlP_\clp}}
\nc{\awlP}{\widehat{\mathsf{P}}}
\nc{\dM}{\mathsf{M}}
\nc{\dC}{\mathsf{C}}
\nc{\cC}{\mathcal{C}}
\nc{\lR}{\widetilde{{R}}}
\nc{\zero}{\mathrm{zero}}
\nc{\PD}{principal }

\nc{\prtl}[1][J]{\rootl_{#1}^+}
\nc{\hL}{\widehat{\Rt}}
\nc{\hF}{\widehat{\F}}
\nc{\Proof}{\begin{proof}}
\nc{\QED}{\end{proof}}
\nc{\e}{\mathrm{e}}

\nc{\Aff}{\mathrm{Aff}}
\nc{\rT}{\mathcal{T}}		
\nc{\rr}{rationally renormalizable\xspace}
\nc{\RA}{{R_\cmA}}		
\nc{\RC}{{R_\cmC}}		
\nc{\proolim}[1][]{\mathop{``{\varprojlim}{\mbox{''}}}\limits_{#1}}
\nc{\qtq}[1][\text{and}]{\quad\text{#1}\quad}

\newcommand{\iB}{{{_i}B(\infty)}} 			 
\newcommand{\Bi}{{B_i (\infty)}} 			 
\newcommand{\cT}{\mathrm{T}} 		
\newcommand{\cTs}{\mathrm{T}^{\ms{1mu}*}}
\newcommand{\tcT}{\widetilde{\mathrm{T}}} 		 
\newcommand{\tcTs}{\widetilde{\mathrm{T}}^{\ms{1mu}*}}
\newcommand{\lT}{\mathcal{T}} 		
\newcommand{\lTs}{\mathcal{T}^{\ms{1mu}*}}
\nc{\corh}{\widehat{\cor}}
\nc{\ang}[1]{\langle{#1}\rangle}
\nc{\rc}{renormalizing coefficient\xspace}
\nc{\cz}{{\cor[z^{\pm1}]}}
\nc{\tp}{\ms{1.5mu}{\widetilde{p}}\ms{2mu}}
\nc{\G}{\mathcal{G}}
\nc{\cc}{\mathfrak{c}}
\nc{\rsP}{{\Phi_\g}}
\nc{\rsX}{{X_\g}}
\nc{\rs}{ \mathsf{s} }
\nc{\Dynkin}{\mathsf{D}}
\nc{\Dat}{\sigma}
\nc{\hf}{\xi}

\nc{\hBi}{\widehat{B}(\infty)}
\nc{\hBvi}{{\widehat{B}_\vt(\infty)}}
\nc{\hBsi}{{\widehat{B}_\sigma(\infty)}}
\nc{\iK}{\mathsf{K}}
\nc{\cBg}[1][\g]{\widehat{B}_{#1}(\infty)}
\nc{\cBsg}[1][\g]{\widehat{B}_{#1}(\infty)^*}
\nc{\cBgk}[1][\g]{\widehat{B}_{#1}^{\iK}(\infty)}
\nc{\cb}{\mathbf{b}}
\nc{\cI}{ \widehat{I} }
\nc{\cIf}{{\widehat{I}_{\mathrm{fin}}}}
\nc{\cIz}{{\widehat{I}_{0}}}
\nc{\cJ}{ \widehat{J} }
\nc{\sB}{\mathcal{B}}
\nc{\sBk}{\sB_{\iK}}
\nc{\sBdk}{\sB_{\ddD, \iK}}
\nc{\sBD}{\sB_{\ddD}}
\nc{\sBkg}{\sBk(\g)}
\nc{\cs}{\star}

\nc{\cd}{\mathrm{D}}
\nc{\cm}{\mathbf{m}}
\nc{\MS}{\mathsf{MS}}
\nc{\hMS}{\widehat{\mathsf{MS}}}
\nc{\rS}{\mathbf{S}}
\nc{\rSs}{\rS^*}
\nc{\brS}{\overline{\rS}}

\nc{\crI}{ \mathsf{I}}
\nc{\crhI}{\mathscr{S} }
\nc{\crD}{\mathsf{D}}
\nc{\crc}{\mathsf{c}}
\nc{\crB}{\mathscr{P}_n}
\nc{\cru}{\mathsf{u}}
\nc{\crsh}{\mathsf{sh}}
\nc{\bfs}{\mathbf{s}}
\nc{\crBB}[1]{\mathscr{P}_#1}

\nc{\gc}{{\g_{\cmC}}}
\nc{\cL}{\mathcal{L}}
\nc{\cLD}{{\cL_\ddD}}
\nc{\ccL}{\mathscr{L}}
\nc{\ccLD}{{\ccL_\ddD}}
\nc{\clen}{\mathsf{len}}

\nc{\hv}{\mathsf{1}}
\nc{\qt}[1]{\quad\text{#1}}
\nc{\snoi}{\smallskip\noindent}
\nc{\mnoi}{\medskip\noindent}

\nc{\ul}[1]{\underline{#1}}
\nc{\dul}[1]{\underline{\underline{#1}}}
\nc{\qh}{\qedhere}
\nc{\ca}{\mathsf{v}}
\nc{\sck}[1][k]{\ms{8mu}{\raisebox{-1.3ex}{$\scriptstyle{#1}$}\hs{-1.4ex}{\succ}}\ms{4mu}}
\nc{\scke}[1][k]{\ms{8mu}{\raisebox{-1.3ex}{$\scriptstyle{#1}$}\hs{-1.4ex}%
{\succcurlyeq}}\ms{4mu}}

\nc{\edot}{\emptyset}
\nc{\Nf}{N_{\gf}}
\nc{\fin}{\mathrm{fin}}
\nc{\trg}{\scalebox{.7}{$\triangle$}}
\nc{\ake}[1][1ex]{\rule[-#1]{0ex}{1ex}}
\nc{\akew}[1][1ex]{\rule[-1ex]{#1}{0ex}}
\nc{\akeu}[1][1ex]{\rule[#1]{0ex}{1ex}}

\nc{\bg}{\mathscr{B}}
\newcommand{\B}{B(\infty)}
\nc{\ipi}{{{_i}\pi}}
\nc{\pii}{{\pi_i}}
\nc{\Ld}{\mathcal{P}}
\nc{\Dc}{\Upsilon}
\nc{\DcL}{\mathcal{L}}
\nc{\bR}{\mathsf{R}}
\nc{\bRs}{\bR^*}
\nc{\rR}{\mathrm{R}}
\nc{\bfi}{\mathbf{i}}
\nc{\bfj}{\mathbf{j}}
\nc{\bfk}{\mathbf{k}}
\nc{\vt}{\vartheta}
\nc{\nP}{\Upsilon}
\nc{\hP}{\widehat{\nP}}
\nc{\fF}{ \widetilde{\mathrm{F}}}
\nc{\fE}{ \widetilde{\mathrm{E}}}
\nc{\fR}{\mathscr{R}}
\nc{\fT}{\mathscr{T}}
\nc{\orb}{\mathrm{orb}}
\nc{\fdr}{\mathrm{r}}

\title[Braid group action on extended crystals]{Braid group action on extended crystals}

\author[E. Park]{Euiyong Park}
\thanks{The research of E. Park was supported by the National Research Foundation of Korea(NRF) Grant funded by the Korea Government(MSIT)(NRF-2020R1F1A1A01065992 and NRF-2020R1A5A1016126).}
\address[E. Park]{Department of Mathematics, University of Seoul, Seoul 02504, Korea}
\email
{epark@uos.ac.kr}

\keywords{Braid groups, Extended crystals, Hernandez-Leclerc categories, Quantum affine algebras} 
\subjclass[2010]{05E10, 05E18, 17B37} %

\date{July 21, 2022}

\begin{document}

 \maketitle

\begin{abstract}
In the paper, we prove that there exists a braid group action on the extended crystal $\hBi$ of finite type. 
The extended crystal $\hBi$ and its braid group action are investigated from the viewpoint of crystal similarity.
We then interpret the braid group action on $\hBi$ in the Hernandez-Leclerc category $\catCO$.

\end{abstract}

\tableofcontents

\section*{Introduction}

The notion of \emph{extended crystals} is introduced in \cite{KP22} for studying the module category of a quantum affine algebra from the viewpoint of the crystal basis theory. The crystals are one of the most powerful tools for studying quantum groups in a combinatorial way, and they appear naturally in a large number of applications in various research areas (see \cite{Kas90, K91, K93, K94}, and see also \cite{HK02, Lu93} and the references therein). We denote by $U_q(\g)$ the quantum group associated with a generalized Cartan matrix $\cmA = (a_{i,j})_{i,j\in I}$ and  by $B(\infty)$ the crystal of the negative half $U_q^-(\g)$.
The extended crystal is defined as
\begin{align*}
	\hBi \seteq  \left\{  (b_k)_{k\in \Z } \in \prod_{k\in \Z} B(\infty) \biggm| b_k =\hv \text{ for all but finitely many $k$} \right\},
\end{align*}
where $\hv$ is the highest weight vector of $B(\infty)$. 
The \emph{extended crystal operators} $\tF_{i,k}$ and $\tE_{i,k}$ ($(i,k)\in \cI \seteq I \times \Z$) are defined in terms of the usual crystal operators $\tf_i$, $ \tes_i$, $ \tfs_i$ and $\te_i$ for the crystal $B(\infty)$, and the weight $\hwt$ for $ \hBi$ is defined by using usual weight function $\wt$ on $B(\infty)$ (see Section \ref{Sec: extended crystal} for details).
As a usual crystal, one can define an $\cI$-colored graph structure on the extended crystal $\hBi$ by using the operators $\tF_{i,k}$ ($(i,k)\in \cI$). 
It is shown in \cite{KP22} that $\hBi$ is connected as an $\cI$-colored graph and  that there exist interesting symmetries $ \cd$ and $\star$ on $\cBg$ compatible with extended crystal operators.

Let $\catC_\g$ be the category of finite-dimensional integrable modules over a quantum affine algebra $U_q'(\g)$, where $q$ is an indeterminate. 
The category $\catC_\g$ has been studied widely in various research fields including representation theory, geometry and mathematical physics  (see \cite{AK97, CP94, FR99, HL10, Kas02, Nak01} for example).
The category $\catC_\g$ has a distinguished subcategory $\catCO$, which is called a \emph{Hernandez-Leclerc category} (\cite{HL10}). 
We choose a \emph{complete duality datum} $\ddD = \{ \Rt_i \}_{i\in \If} $ for $\catCO$ (see Section \ref{subsec: HL}).
Let $\sB(\g)$ be the set of the isomorphism classes of simple modules in $\catCO$.  
It is proved in \cite{KP22} that $\sB(\g)$ has the categorical crystal structure defined by
$$
\ttF_{i,k} (M) \seteq    (\dual^k \Rt_i) \htens M ,\qquad \ttE_{i,k} (M) \seteq    M \htens (\dual^{k+1} \Rt_i)
$$
for $ M \in \sB(\g)$ and $ (i, k)\in \cIf$
and that $\sB(\g)$ is isomorphic to the extended crystal $\cBg[\gf]$ of the crystal $B_\gf(\infty)$, i.e., 
$$
\Phi_\ddD \col  \cBg[\gf]  \buildrel \sim \over \longrightarrow \sBD(\g).
$$
 Here $\dual$ is the right dual functor in $\catCO$ and $\gf$ is the simple Lie algebra of simply-laced finite type given in \eqref{Table: root system}.
Under the isomorphism $\Phi_\ddD$ between $\sB(\g)$ and $\cBg[\gf]$, the categorical crystal operators $\ttF_{i,k} $ and $\ttE_{i,k} $ correspond to the extended crystal operators $\tF_{i,k}$ and $\tE_{i,k}$, and the dual functor $\dual$ matches with the operator $\cd$. 
This extended crystal isomorphism allows us to study simple modules in $\catCO$ in terms of  the extended crystal $\cBg[\gf]$, which is a new combinatorial approach to the category $\catCO$ from the viewpoint of crystals.

\smallskip

In this paper, we prove that there exists a braid group action on the extended crystal $\hBi$ associated with a \emph{finite Cartan matrix} and study several properties of the braid group action.
Let $\cmA = (a_{i,j})_{i,j\in I}$ be a Cartan matrix of finite type, and let $\hBi$ be the extended crystal of the crystal $B(\infty)$ associated with $\cmA$.
We denote by $\bg_\cmA$ the \emph{generalized braid group} (or \emph{Artin-Tits group}) defined by the generators $r_i$ ($i\in I$) and the following defining relations:
$$
\underbrace{r_{i}r_{j}r_{i}r_{j} \cdots}_{m(i,j) \text{ factors}} = \underbrace{r_{j}r_{i}r_{j}r_{i} \cdots}_{m(j,i) \text{ factors}} 
\qquad \text{ for $i,j\in I$ with $i\ne j$,}
$$ 
where $m(i,j)$ is the integer determined by $\cmA$ (see \eqref{Eq: m(i,j)} for the precise definition).
We simply call $\bg_\cmA$ the braid group associated with $\cmA$. 

 For each $i\in I$, we define bijections 
$$
 \bR_i: \hBi  \buildrel \sim \over \longrightarrow \hBi \quad \text{ and }\quad    \bRs_i: \hBi  \buildrel \sim \over \longrightarrow \hBi
 $$
using usual crystal operators and the \emph{Saito crystal reflections} (\cite{Saito94}) on $B(\infty)$ (see  Section \ref{Sec: braid group actions}). We prove that $\bR_i$ and $\bRs_i$ are inverse to each other and that they satisfy the braid group relations for $\bg_\cmA$ (Theorem \ref{Thm: main1}). Thus we have the action of $\bg_\cmA$ on $\hBi$ via the bijections $\bR_i$. 
In the course of proofs, \emph{PBW bases} are used crucially. 
The actions $\bR_i$ and $\bRs_i$ commute with the operator $\cd$, i.e.,  $\bR_i \circ \cd = \cd \circ   \bR_i $ and $\bRs_i \circ \cd = \cd \circ   \bRs_i $, and 
they are compatible with the reflection $s_i$ on the weight lattice via the weight function $\hwt$, i.e.,
$$
\hwt ( \bR_i (\cb) ) = s_i ( \hwt(\cb) ), \quad  \hwt ( \bRs_i (\cb) ) = s_i ( \hwt(\cb) ) \qquad \text{for any $\cb \in \hBi$}
$$
(see Lemma \ref{Lem: R R*}).
Thus the braid group action $\bR_i$ can be understood as a natural extension of the Saito crystal reflection on $B(\infty)$ to the extended crystal $\hBi$. 
Let $\rR(w_0)$ be the set of all reduced expression of the longest element $w_0$ (see \eqref{Eq: reduced ex}). 
We show that, for any    $\bfi \in \rR(w_0)$, 
$$
\bR_\bfi = \cd \circ \zeta \qquad \text{ and } \qquad \bRs_\bfi = \cd^{-1} \circ \zeta,
$$ 
where $\zeta$ is the involution defined in \eqref{Eq: i->i*}.
In particular, unless the Cartan matrix $\cmA$ is of type $A_n$ ($n\in \Z_{>1}$), $D_{n}$ ($n$ is odd) or $E_6$, we have
$$
\bR_\bfi = \cd   \qquad \text{ and } \qquad \bRs_\bfi = \cd^{-1}.
$$ 
Hence the central elements of the braid group $\bg_\cmA$ act as $\cd^t$ on $\hBi$ for some $t\in \Z$. 
As a $\bg_\cmA$-set, $\hBi$ is not transitive. It is conjectured that $ \hBi$ is faithful as a $\bg_\cmA$-set for any finite Cartan matrix $\cmA$ (Remark \ref{Rmk: tran}).
We also conjecture that a braid group action with similar properties exists for a generalized Cartan matrix of \emph{arbitrary type.}   

We next investigate the extended crystal $\hBi$ from the viewpoint of \emph{similarity of crystals} (\cite{K96}). Applying the Dynkin diagram folding (\cite[Section 5]{K96}) for $B(\infty)$ to extended crystals, we obtain an analogue of \cite[Theorem 5.1]{K96} for extended crystals.   
Let $\sigma$ be a Dynkin diagram automorphism given in \eqref{Eq: Folding1}. We denote by  $\hBsi$
the extended crystal associated with the Cartan matrix folded by $\sigma$ and by $ \hBi^\sigma$ the set of fixed points of $\hBi$ under the action $\sigma$. 
Proposition \ref{Prop: folding} says that there exists an crystal isomorphism  
$$
\hP_\sigma :  \hBsi \buildrel \sim \over \longrightarrow \hBi^\sigma,
$$
where the extended crystal operator for $\hBi^\sigma$ is defined as a product of usual extended crystal operators of $\hBi$ in the same $\sigma$-orbit. Moreover, the isomorphism $\hP_\sigma$ is compatible with the braid group actions, i.e., 
$$
\hP_\sigma \left(r_j'(\cb)\right) = \fdr^\sigma_j \left(\hP_\sigma (\cb) \right) \qquad \text{ for any }j,
$$
where $ r_j'$ is a generator of the braid group $\bg_\sigma$ associated with the Cartan matrix folded by $\sigma$ and 
$ \fdr^\sigma_j $ is defined as a product of usual generators of $\bg_\cmA$ in the same $\sigma$-orbit (see \eqref{Eq: folding braid generators}).

We finally interpret the braid group action on $\hBi$ in the Hernandez-Leclerc category $\catCO$.
It is announced in \cite{KKOP21A} that 
there is an action of the braid group $\bg_\cmAf$ on the quantum Grothendieck ring of $\catCO$ and that 
there are monoidal autofunctors on the localization  $\T_N$ which give the braid group actions at the Grothendieck ring.  
We remark that  $\T_N$ can be regarded as a graded version of $\catCO$ for affine type $A^{(1)}_{N-1}$.   
It is conjectured in \cite[Section 5]{KKOP21A} that such functors $\fR_i$ ($i\in \If$) exist for any arbitrary quantum affine algebra (see Conjecture \ref{Conj: R_i}).
Under the assumption that the conjecture holds, Proposition \ref{Prop: braid for CgO} tells us that the braid group action $\bR_i$ on $\hBi$ is a crystal-theoretic shadow of the conjectural functor $\fR_i$, i.e., 
$$
\Phi_\ddD ( \bR_i (\cb) ) = \fR_i ( \Phi_\ddD(\cb) )  \qquad \text{ for any $\cb \in \cBg[\gf]$ and $i\in \If$.}
$$

\medskip

This paper is organized as follows. 
In Section \ref{Sec:Preliminaries}, we review briefly the notion of extended crystals and Hernandez-Leclerc categories. 
In Section \ref{Sec: crystals and PBW}, we recall the Saito crystal reflections and a connection to PBW bases.  
In Section \ref{Sec: braid group actions}, we define a braid group action on the extended crystal $\hBi$ and investigate its properties. 
We then study the similarity for $\hBi$ in Section \ref{Sec: similarity}, and interpret the braid group action for $\hBi$ in the Hernandez-Leclerc category $\catCO$ in Section \ref{Sec: HL cat}.

\vskip 2em

\section{Preliminaries} \label{Sec:Preliminaries}

\subsection{Crystals} \

In this subsection, we briefly review the notion of crystals (see \cite{Kas90, K91, K93, K94}, and see also \cite{HK02}).
Let $I $ be a finite index set. 

\begin{definition}
A quintuple $ (\cmA,\wlP,\Pi,\wlP^\vee,\Pi^\vee) $ is called a  (symmetrizable) {\it Cartan datum} if it
consists of
\bna
	\item a generalized  \emph{Cartan matrix} $\cmA=(a_{ij})_{i,j\in I}$, 
	\item a free abelian group $\wlP$, called the {\em weight lattice},
	\item $\Pi = \{ \alpha_i \mid i\in I \} \subset \wlP$,
	called the set of {\em simple roots},
	\item $\wlP^{\vee}=
	\Hom_{\Z}( \wlP, \Z )$, called the \emph{coweight lattice},
	\item $\Pi^{\vee} =\{ h_i \in \wlP^\vee \mid i\in I\}$, called the set of {\em simple coroots}, 
\ee
which satisfy the following properties: \bnum
\item $\lan h_i, \alpha_j \ran = a_{ij}$ for $i,j \in I$,
\item $\Pi$ is linearly independent over $\Q$,
\item for each $i\in I$, there exists $\Lambda_i \in \wlP$, called a \emph{fundamental weight}, such that $\lan h_j,\Lambda_i \ran =\delta_{j,i}$ for all $j \in I$.
\item there is a symmetric bilinear 
form $( \cdot \, , \cdot )$ on $\wlP$ satisfying 
$(\al_i,\al_i)\in\Q_{>0}$ and 
$ \lan h_i,  \lambda\ran = {2 (\alpha_i,\lambda)}/{(\alpha_i,\alpha_i)}$. 
\ee
\end{definition}

We denote by $ \rlQ \seteq \bigoplus_{i \in I} \Z \alpha_i$ and by $\prD$ the set of  positive roots.
The \emph{Weyl group} $\weyl$ associated with $\cmA$ is the subgroup of $\mathrm{Aut}(\wlP)$ generated by  
$$ 
s_i(\lambda) \seteq  \lambda - \langle h_i, \lambda \rangle \alpha_i  
$$ 
for any $i\in I$. 
In this paper, the standard notation for Dynkin diagrams given in \cite{Kac} is used except type $E_k$ ($k=6,7,8$). 
For the case of type $E_k$ ($k=6,7,8$), we follow the notation for Dynkin diagrams appeared in  \cite[Appendix A.1]{KKOP22}.
For $w \in \weyl$, $\ell(w)$ denotes the length of $w$, and $\rR(w)$ is the set of all reduced expressions of $w$, i.e., 
\begin{align} \label{Eq: reduced ex}
\rR(w) := \{ (i_1, i_2, \ldots, i_t) \in I^t \mid w = s_{i_1} s_{i_2} \cdots s_{i_t} \}
\end{align}
where $t = \ell(w)$.
We assume that $\cmA$ is of finite type. Let $w_0$ be the longest element of $\weyl$.
One can show that, for any $ (i_1, i_2,  \ldots, i_\ell) \in \rR(w_0) $, we have 
\begin{align} \label{Eq: rxw rot}
	(i_{2}, i_{3}, \ldots, i_\ell, i_{1}^* )\in \rR(w_0) \quad \text{ and } \quad    (i_\ell^*,  i_{1}, i_{2}, \ldots, i_{\ell-1} )  \in \rR(w_0),
\end{align}	 
where $i^*$ is defined by 
\begin{align} \label{Eq: *}
\al_{i^*} = - w_0(\al_i)
\end{align}

Let $U_q(\g)$ be the \emph{quantum group} associated with $(\cmA, \wlP,\wlP^\vee, \Pi, \Pi^{\vee})$, and $U_q^-(\g)$ be the subalgebra of $U_q(\g)$ generated by $f_i$ ($i\in I$) (see \cite[Chapter 3]{HK02} for details).
Let $\A=\Z[q,q^{-1}]$, and  $\A_0$ be the subring of $\Q(q)$ consisting of rational functions which are regular at $q=0$.
For each $i \in I$, we denote by $\tf_i$ and $\te_i$ the  Kashiwara operators  on $U_q^-(\g)$ (\cite[(3.5.1)]{K91}), and 
set
\begin{align*}
	&L(\infty) : = \sum_{l \in \Z_{\ge0}, \ i_1,\ldots, i_l \in I} \A_0 \tf_{i_1} \cdots \tf_{i_l} \hv \subset U_q^-(\g),  \ \  \overline{L(\infty)} : = \{ \overline{x} \in U_q^-(\g) \mid x \in L(\infty) \}, \\
	&B(\infty) : = \{ \tf_{i_1} \cdots \tf_{i_l} \hv \mod qL(\infty) \mid l \in \Z_{\ge0}, \ i_1,\ldots, i_l \in I \} 
	\subset L(\infty) /qL(\infty),
\end{align*}
where
$^- : U_q(\g) \buildrel \sim \over \rightarrow  U_q(\g)$ is the $\Q$-algebra automorphism defined by $\overline{e_i}=e_i, \ \overline{f_i}=f_i, \ \overline{q^h}=q^{-h},\ \text{and} \  \overline{q}=q^{-1}$.
We call $B(\infty)$ the crystal of $U_q^-(\g)$. When we need to emphasize $B(\infty)$ as a $U_q(\g)$-crystal, we write $B_\g(\infty)$ instead of $B(\infty)$.

Let $G^\low$ be the inverse of the $\Q$-linear isomorphism  $(\Q\otimes_\Z U_\A^-(\g)) \cap L(\infty) \cap \overline{L(\infty)} \buildrel \sim\over \longrightarrow L(\infty)/qL(\infty)$.
The set 
$$\Glo (\infty) : =\{\Glo(b) \in U_\A^-(\g) \mid b \in B(\infty) \}$$
is an $\A$-basis of $U_\A^-(\g)$, which is called the \emph{lower global basis} (or \emph{canonical basis}). Then
we have the dual basis with respect to the \emph{Kashiwara bilinear form} (\cite[Proposition 3.4.4]{K91})
$$\mathbf \Gup (\infty) : =\{\Gup(b)  \mid b \in B(\infty) \},$$
which is called the  \emph{upper global basis} (or \emph{dual canonical basis}).
The $\Q(q)$-antiautomorphism $*$ of $U_q(\g)$ given by
$	(e_i)^* = e_i$, $(f_i)^* = f_i$, and  $(q^h)^*  = q^{-h}$
 provides another crystal operators $ \te_i^*$ and $ \tf_i^*$.
For any $b \in B(\infty)$, we set 
$$
\tem_i (b) := \te_i^{\ep_i(b)} (b) \quad \text{ and }\quad  \tesm_i (b) := \te_i^{* {\hskip 0.05em}  \eps_i(b)} (b).
$$
Here we define $\ep_i(b) := \max \{ k \ge 0 \mid \te_i^k (b)\ne 0 \}$ and $ \vphi_i(b) := \ep_i(b) + \langle h_i, \wt(b) \rangle$ (resp.\ $\ep_i^*(b) := \max \{ k \ge 0 \mid \te_i^{*k} (b)\ne 0 \}$ and   $ \vphi_i^*(b) := \eps_i(b) + \langle h_i, \wt(b) \rangle$).
For more details of crystals, we refer the reader to \cite{Kas90, K91, K93, K94} and \cite[Chapter 4]{HK02}.

We define 
\begin{align} \label{Eq: i->i*}
	\zeta:  I \buildrel \sim\over \longrightarrow  I,  \qquad i \mapsto i^*,
\end{align}
where $i^*$ is given in \eqref{Eq: *}.
If $\g$ is of type $A_n$ ($n \in \Z_{>1}$), $D_{n}$ ($n$ is odd), and $E_6$, then $\zeta$  is given as follows:
\bna
\item (Type $A_n$)   $i^* =  n+1-i $, 
\item (Type $D_n$) 
$i^* =
\begin{cases}
	n-(1-\xi)& \text{ if  $n$ is odd and $i=n-\xi$ ($\xi=0,1$)},  \\
	i& \text{ otherwise,}
\end{cases}
$
\item (Type $E_6$)  The map $i \mapsto i^*$ is determined by 
$$i^* =
\begin{cases}
	6 & \text{ if } i=1,  \\
	i & \text{ if } i=2,4,  \\
	5 & \text{ if } i=3.  
\end{cases}
$$
\ee
Otherwise, $\zeta$ is the identity.
Note that we use the index set of the Dynkin diagram of type $E_6$ given in  \cite[Appendix A.1]{KKOP22}.
The involution $\zeta$ induces an involution on the crystal $ B(\infty)$, which is also denoted by $\zeta$.

\begin{example} \label{Ex: multisegment}

We briefly review the \emph{multisegment realization} of $B(\infty)$ of type $A_n$ (see \cite{V01} and see also \cite{SST18} and \cite[Section 7.1]{KP22}). This realization will be used as an example in the paper.
Let $[a,b]$ be an interval for $  1 \le a \le b \le n $.  A \emph{multisegment} is a multiset of $[a,b]$'s, and we denote by $\MS_n$ the set of multisegments. 
If  $a > b$, then we define $[a,b]\seteq\emptyset$. We write $[a] = [a,b]$ if $a=b$. 
For any multisegment $\cm = \{ m_1, \ldots, m_k \}$, we write $\cm = m_1 + m_2 + \cdots + m_k$.
The crystal structure of $\MS_n$ is described in \cite[Section 7.1]{KP22}, which is isomorphic to $B(\infty)$. 
We follow the description of \cite[Section 7.1]{KP22}.

\end{example}

\subsection{Extended crystals} \label{Sec: extended crystal} \

In this subsection, we briefly review the notion of extended crystals introduced in \cite{KP22}.

We keep the notations given in the previous subsection.
We set 
\begin{align*}
	\hBi \seteq  \Bigl\{  (b_k)_{k\in \Z } \in \prod_{k\in \Z} B(\infty) \bigm| b_k =\hv \text{ for all but finitely many $k$} \Bigr\}.
\end{align*}
For any $\bfb = ( b_k)_{k\in \Z} \in \hBi$, we sometimes write $ \bfb = ( \ldots, b_2, b_1, \underline{b_0}, b_{-1}, \ldots  )$, where the underlined element is located at the 0-position.
We define $\one \seteq  (\hv)_{k\in \Z} \in \hBi$, which can be viewed as a highest weight vector of $\hBi$. Set  $\cI \seteq  I \times \Z$, and  
let $(i,k)\in \cI$ and  $\cb = (b_k)_{k\in \Z} \in \hBi $. Define   
$$	\hwt(\cb)  \seteq  \sum_{k \in \Z}  \wt_k(\cb), $$
where $\wt_k(\cb) \seteq  (-1)^k \wt(b_k)$, and set 
$$
\hBi_\beta := \{ \cb \in \hBi \mid \hwt(\cb) = \beta \} \qquad \text{ for any $\beta \in \rlQ$.}
$$
We define
\begin{align*}
	\hep_{i,k}(\cb) \seteq   \ep_{i, k}(\cb) - \eps_{i,k+1}(\cb), 
\end{align*}
where 
$ \ep_{j,t}(\cb) \seteq   \ep_j(b_t)$ and    $ \eps_{j,t}(\cb) \seteq   \eps_j(b_t)$ for $(j,t) \in \cI$.

The \emph{extended crystal operators} 
\begin{align*}
	\tF_{i,k} \col  \hBi \longrightarrow \hBi \qtq	\tE_{i,k} \col  \hBi \longrightarrow \hBi
\end{align*}
are defined by
\begin{equation} \label{Eq: tE and tF}
	\begin{aligned}
		\tF_{i,k}(\cb) & \seteq  
		\begin{cases}
			(\cdots , b_{k+2},  \  b_{k+1} , \ \tf_i( b_k), \ b_{k-1}, \cdots ) & \text{ if } \hep_{ i,k} (\cb) \ge 0,\\
			(\cdots , b_{k+2},  \ \tes_i (b_{k+1} ), \ b_k, \ b_{k-1}, \cdots ) & \text{ if }  \hep_{ i,k} (\cb) < 0 ,
		\end{cases}
		\\
		\tE_{i,k}(\cb) & \seteq  
		\begin{cases}
			(\cdots , b_{k+2},  \  b_{k+1} , \ \te_i( b_k), \ b_{k-1}, \cdots ) & \text{ if } \hep_{i,k} (\cb) >  0,\\
			(\cdots , b_{k+2},  \ \tfs_i (b_{k+1} ), \ b_k, \ b_{k-1}, \cdots ) & \text{ if }  \hep_{i,k} (\cb) \le  0,
		\end{cases}
	\end{aligned}
\end{equation}
for any $(i, k) \in  \cI$ and $\cb =   (b_k)_{k\in \Z} \in \hBi$. 
Note that $\tF_{ i,k }$ is the inverse of $\tE_{ i,k }$ (\cite[Lemma 4.2]{KP22}).
It is shown in \cite[Section 4]{KP22} that the extended crystals $\hBi$ enjoy similar properties to usual crystals.

For $p \in \Z$ and  $\cb = (b_k)_{k\in \Z} \in \hBi$, we define $\cd^p (\cb) = (b_k')_{k\in \Z} \in \hBi$ by
\eq
b_k'  = b_{k-p} \qquad \text{ for any $k\in \Z$},
\label{def:dual}
\eneq
which gives a bijection 
\begin{align*}
	\cd^p \col  \hBi \longrightarrow  \hBi.
\end{align*}
We remark that 
$
\cd^p ( \tF_{i,k} (\cb) ) = \tF_{i,k+p} (   \cd^p  (\cb) )
$
for any $p\in \Z$ and $ (i,k) \in \cI $.

The extended crystal $\hBi $ has the $\cI $-colored graph structure defined by $\tF_{i,k}$ for $(i,k) \in \cI$, i.e.,
$$
\cb \To[{(i,k)}]\cb' \quad  \text{ if and only if} \quad  \cb' = \tF_{i,k}( \cb).
$$

\begin{prop} [{\cite[Lemma 4.4]{KP22}}] \label{Lem: connectedness}
As an $\cI$-colored graph, $\hBi $ is connected.
\end{prop}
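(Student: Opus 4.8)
The plan is to show that every $\cb \in \hBi$ is connected to the distinguished vertex $\one = (\hv)_{k\in\Z}$ in the underlying undirected graph. Since the edges are precisely the relations $\cb' = \tF_{i,k}(\cb)$ and $\tE_{i,k} = \tF_{i,k}^{-1}$ by \cite[Lemma 4.2]{KP22}, traversing an edge backwards amounts to applying some $\tE_{i,k}$; hence it suffices to produce, for each $\cb \neq \one$, a finite chain of operators $\tE_{i,k}$ carrying $\cb$ to $\one$. To organize this as an induction I would introduce the statistic
$$
D(\cb) \seteq \sum_{k\in\Z} \Ht\bigl(-\wt(b_k)\bigr) \in \Z_{\ge 0},
$$
which is finite because $b_k = \hv$ for all but finitely many $k$, and which vanishes exactly when $b_k = \hv$ for every $k$, i.e.\ when $\cb = \one$.

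The heart of the matter is the claim that whenever $\cb \neq \one$ there is a single operator $\tE_{i,k}$ with $D(\tE_{i,k}(\cb)) = D(\cb) - 1$. Granting this, the proposition follows by induction on $D(\cb)$: the element $\tE_{i,k}(\cb)$ is connected to $\one$ by the inductive hypothesis, and $\cb = \tF_{i,k}(\tE_{i,k}(\cb))$ is joined to it by an edge, so $\cb$ is connected to $\one$ as well. For the claim I would work at the top of the support. Let $k_0$ be the largest index with $b_{k_0} \neq \hv$, which exists because $\cb \neq \one$; then $b_{k_0+1} = \hv$. Using the standard fact that $\hv$ is the unique element of $B(\infty)$ annihilated by all $\te_i$ (equivalently, with $\ep_i = 0$ for all $i\in I$), I can choose $i \in I$ with $\ep_i(b_{k_0}) > 0$. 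Since $\eps_{i,k_0+1}(\cb) = \eps_i(\hv) = 0$, the defect reduces to $\hep_{i,k_0}(\cb) = \ep_i(b_{k_0}) > 0$, so the first branch of \eqref{Eq: tE and tF} applies: $\tE_{i,k_0}$ replaces $b_{k_0}$ by $\te_i(b_{k_0})$ and fixes every other component. As $\Ht\bigl(-\wt(\te_i(b_{k_0}))\bigr) = \Ht\bigl(-\wt(b_{k_0})\bigr) - 1$, this gives $D(\tE_{i,k_0}(\cb)) = D(\cb) - 1$, proving the claim.

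The only genuinely non-combinatorial input is the characterization of $\hv$ as the unique $\te$-highest vertex of $B(\infty)$; everything else is bookkeeping with the piecewise definition in \eqref{Eq: tE and tF}. The point that requires care — and the reason I erase the \emph{top} nonzero component rather than an interior one — is that choosing $k = k_0$ forces $b_{k_0+1} = \hv$, which makes the $*$-term $\eps_{i,k_0+1}$ vanish and thereby guarantees $\hep_{i,k_0} > 0$, selecting the branch that applies $\te_i$ to $b_{k_0}$. At an interior index $\hep_{i,k}$ could be negative, in which case $\tE_{i,k}$ would act by $\tfs_i$ on a neighbouring component and merely shuffle weight between components rather than strictly decreasing $D$. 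Thus the main (mild) obstacle is simply to argue that the operator selection can always be arranged to be monotone in $D$, which the top-down choice achieves.
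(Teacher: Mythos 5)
Your proof is correct: the descent statistic $D(\cb)=\sum_{k}\Ht(-\wt(b_k))$ is well defined and vanishes only at $\one$, and your top-down choice of $k_0$ (so that $b_{k_0+1}=\hv$ forces $\eps_{i,k_0+1}(\cb)=0$ and hence $\hep_{i,k_0}(\cb)=\ep_i(b_{k_0})>0$) correctly selects the branch of $\tE_{i,k}$ that strictly decreases $D$, with the only external input being that $\hv$ is the unique element of $B(\infty)$ killed by all $\te_i$. Note that the paper itself gives no proof — it quotes the statement from \cite[Lemma 4.4]{KP22} — but your argument is the natural reduction-to-$\one$ by induction that such a connectedness claim calls for, and I see no gap in it.
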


\begin{example} \label{Ex: multi_EC}
We keep the notation given in Example \ref{Ex: multisegment}.
We define 
$$
\hMS_n  := \Bigl\{  (\bfm_k)_{k\in \Z } \in \prod_{k\in \Z} \MS_n \bigm|  \bfm_k = \emptyset \text{ for all but finitely many $k$} \Bigr\}. 
$$
Since $\MS_n $ is isomorphic to $ B(\infty)$, 
$\hMS_n$ has the extended crystal structure induced from the crystal structure of $\MS_n$.	
	
\end{example}

\subsection{Categorical crystals of Hernandez-Leclerc categories} \label{subsec: HL}\ 

In this subsection, we briefly review the categorical crystals of Hernandez-Leclerc categories developed in \cite{KP22}.
This subsection will be used only in Section \ref{Sec: HL cat}.

We denote by $\catC_\g$ the category of finite-dimensional integrable modules over $U_q'(\g)$ associated with an affine Cartan matrix $\cmA$. 
We set $ I_0 \seteq I \setminus \{ 0 \}$. Here we refer the reader to \cite[Section 2.3]{KKOP20A} for the choice of $0$. 
We write $M^{\otimes k} \seteq \underbrace{M \tens \cdots \tens M}_{\ake[.5ex] k\text{-times}}$
for $k\in\Z_{\ge0}$. 
For any module $X$ of finite length, $\hd(X)$ is the head of $X$ and, for the sake of simplicity, $M \htens N$ denotes the head of $M\tens N$ for $M,N\in \catC_\g$. 
A simple module $N$ is \emph{real} if $N \tens N $ is simple.
For any $M\in \catC_\g$,   $\dual (M)$ denotes the right dual of $M$, which is extended to $\dual^k (M)$ for all $k \in \Z$.
We denote by $\catCO$ the \emph{Hernandez-Leclerc category}, which is a full subcategory of $\catC_\g$ with certain conditions (see \cite{HL10} and see also \cite[Section 2.2]{KP22} for details).

We call  $\ddD \seteq \{ \Rt_i \}_{i\in J} $
 a \emph{strong duality datum} associated with a simply-laced finite Cartan matrix $\cmC = (c_{i,j})_{i,j\in J}$
 if it satisfies the following: 
\bna
\item $\Rt_i$ is a  root module for any $i\in J$, 
\item $\de(\Rt_i, \dual^k(\Rt_j)) = - \delta(k= 0)\, c_{i,j}$
for any $k\in \Z$ and $i,j\in J$ with $i\not=j$,
\ee
where a simple module  $N$ is a root module if it is real and $ \de\bl N, \dual^k N\br  =  \delta(k=\pm 1)$ for any $k\in\Z$. 
For a strong duality datum $\ddD = \{ \Rt_i \}_{i\in J} $, 
one can consider the corresponding \emph{quantum affine Schur-Weyl duality functor} 
$$
\F_\ddD \col \RC\gmod \longrightarrow \catCO.
$$
Here $\RC$ is the \emph{symmetric quiver Hecke algebra} corresponding to $\cmC$ (see \cite{KKK18A, KKOP20A} for details). We denote by $\catCD$  the smallest full subcategory of $\catCO$ satisfying 
\bna
\item   $\catCD$ contains $\F_\ddD( N )$ for any simple  $R_{\cmC}$-module $N$, 
\item  $\catCD$ is stable by taking subquotients, extensions, and tensor products. 
\ee
A strong duality datum $\ddD$ is said to be \emph{complete} if for any simple module $M$ in $\catCO$, there are simple modules $M_k $ in $ \catCD$ $(k\in \Z)$ such that 
\bna
\item $M_k$ is isomorphic to the trivial module $ \one$ for all but finitely many $k$,

\item $M $ is isomorphic to $ \hd ( \cdots \tens \dual^2 M_2 \tens \dual M_1 \tens \ M_0 \tens \dual^{-1} M_{-1} \tens \cdots  ).$
	\ee

Let $\gc$ be the Lie algebra associated with $\cmC$.
If $\ddD$ is complete, then the simple Lie algebra $\gc$ has to be of the type $X_\g$ given in the table \eqref{Table: root system} (see \cite[Proposition 6.2]{KKOP20A}).
In this case, we write $\gf$, $\If$, $\cmAf$ etc.\ instead of $\gc$, $J$, $\cmC$ etc. 
\renewcommand{\arraystretch}{1.5}
\begin{align} \label{Table: root system} \small
	\begin{array}{|c||c|c|c|c|c|c|c|} 
		\hline
		\text{Type of $\g$} & A_n^{(1)}  & B_n^{(1)} & C_n^{(1)} & D_n^{(1)} & A_{2n}^{(2)} & A_{2n-1}^{(2)} & D_{n+1}^{(2)}  \\
		&(n\ge1)&(n\ge2)&(n\ge3)&(n\ge4)&(n\ge1)&(n\ge2)&(n\ge3)\\
		\hline
		\text{Type $X_\g$} & A_n & A_{2n-1}    & D_{n+1}   &  D_n & A_{2n} & A_{2n-1} & D_{n+1}  \\
		\hline
		\hline
		\text{Type of $\g$} & E_6^{(1)}  & E_7^{(1)} & E_8^{(1)} & F_4^{(1)} & G_{2}^{(1)} & E_{6}^{(2)} & D_{4}^{(3)}  \\
		\hline
		\text{Type $X_\g$} & E_6 & E_{7}    & E_{8}   & E_6 & D_{4} & E_{6} & D_{4}  \\
		\hline
	\end{array}
\end{align} 

From now on, we assume that $\ddD = \{ \Rt_i \}_{i\in \If} $ is a complete duality datum. 
Let $\sB(\g)$ be the set of the isomorphism classes of simple modules in $\catCO$.  
It is proved in \cite{KP22} that $\sB(\g)$ has the categorical crystal structure defined by
$$
\ttF_{i,k} (M) \seteq    (\dual^k \Rt_i) \htens M,  \qquad  \ttE_{i,k} (M) \seteq    M \htens (\dual^{k+1} \Rt_i)
$$
for $ M \in \sB(\g)$ and $ (i, k)\in \cIf$,  
and that $\sB(\g)$ is isomorphic to the extended crystal $\cBg[\gf]$. 
We write $\sBD(\g)$ for $\sB(\g)$ when considering $\sB(\g)$ with $\ttF_{i,k}$ and $\ttE_{i,k}$.   
Let us briefly explain the isomorphism between $\cBg[\gf]$ and $\sBD(\g)$.

Let $ B_\ddD$ be the set of the isomorphism classes of simple modules in $\catCD$. 
It is shown in \cite{LV09} that the set of the isomorphism classes of simple modules in $ R_\gf\gmod$ forms a crystal isomorphic to the crystal $B_{\gf}(\infty)$. 
As $\F_\ddD$ preserves simple modules, $\F_\ddD$ induces a bijective map 
\begin{align} \label{Eq: Fd LD}
\cL_\ddD\col  B_{\gf}(\infty) \buildrel \sim \over \longrightarrow B_\ddD 
\end{align}
(see \cite[Lemma 3.2]{KP22}).
For any $\cb = (b_k)_{k\in \Z}  \in \cBg[\gf]$,  define 
\begin{align*}
	\ccLD(\cb) \seteq  \hd \left( \bigotimes_{k=+\infty }^{-\infty} \dual^k L_k \right) =  \hd ( \cdots \tens \dual^2 L_2 \tens \dual L_1 \tens  L_0 \tens \dual^{-1} L_{-1} \tens \cdots ), 
\end{align*}
where $ L_k =    \cLD( b_{k} ) $ for $k\in \Z$. Then the map $\Phi_\ddD \col  \cBg[\gf]  \buildrel \sim \over \longrightarrow \sBD(\g)$  defined by 
$$ 
\Phi_\ddD ( \cb) \seteq  \ccLD(\cb) \qquad \text{ for any $\cb \in  \cBg[\gf] $}
$$
is an extended crystal isomorphism, i.e., 
\begin{align*}
	\Phi_\ddD(  \tF_{i,k}  ( \cb) ) =  \ttF_{i,k} (  \Phi_\ddD(    \cb )), \qquad \Phi_\ddD(  \tE_{i,k}  ( \cb) ) =  \ttE_{i,k} (  \Phi_\ddD(    \cb ))
\end{align*}
for $(i,k)\in \cIf $ and $\cb \in  \cBg[\gf] $ (see \cite[Theorem 5.9]{KP22}).

\begin{example} \label{Ex: ECI}
Let $U_q'(\g)$ be the quantum affine algebra of affine type $A_2^{(1)}$. In this case, $\gf$ is of type $A_2$ and $I_0 = \{1,2\}$. 
We review briefly the extended crystal isomorphism between $\hMS_2$ and $ \sBD(\g)$ given in \cite[Section 7.4]{KP22}.
Let  
\begin{align*}
	\crBB{2} \seteq (\Z_{\ge0})^{\oplus  \crhI_2},
\end{align*}
where $\crhI_2 \seteq  \{ (i, a) \in I_0 \times \Z \mid   a-i \equiv 1 \bmod 2  \} $. 
 We regard $ \crhI_2$ as a subset of $\crBB{2}$.
For $(i,a) \in \crBB{2}$ and $k\in \Z$, we set 
$$
\cdual^k(i,a) := 
\begin{cases}
	(i, a + 3k) & \text{ if $k$ is even},\\
	(3-i, a + 3k) & \text{ if $k$ is odd}.
\end{cases}
$$
We define $\gamma_k : \MS_2 \rightarrow  \crBB{2}$ by 
$$
 \gamma_k ( a[2] + b[12] + c[1]) = a \cdual^k( 1,2 ) + b \cdual^k( 2,1 ) +  c \cdual^k( 1,0 ).
$$
Then we have the bijection
\begin{align} \label{eq: gamma}
\gamma: \hMS_2 \buildrel \sim \over \longrightarrow  \crBB{2}
\end{align}
defined by $ \gamma( \hbfm) := \sum_k \gamma_k(\bfm_k)$ for any $ \hbfm = (\bfm_k)_{k\in \Z} \in \hMS_2$.

For any $\la = \sum_k (i_k, a_k) \in  \crBB{2}$, we denote by $V(\la)$ the simple module in $\catC_\g$ whose \emph{affine highest wight} is $ \sum_{k} (i_k, (-q)^{a_k})$ (see \cite[Theorem 2.2]{KP22} for affine height weights). Note that 
$V(i,a) = V(\varpi_{i})_{(-q)^a}$ and $ \dual^k \bl V( i,a)\br\simeq  V\bl \cdual^k ( i,a) \br $ for $k\in \Z$.

We now choose a complete duality datum $\ddD = \{  \Rt_1, \Rt_2   \} $, where $\Rt_1 := V(\varpi_1) $ and $\Rt_2 := V(\varpi_1)_{(-q)^2} $, and define
$$
\Phi_\ddD: \hMS_2 \longrightarrow \sBD(\g)
$$ 
by $ \Phi_\ddD ( \hbfm)  := V(  \gamma (\hbfm)) $ for any $ \hbfm \in \hMS_2$. Then $\Phi_\ddD$ becomes an extended crystal isomorphism (see \cite[Section 7.4]{KP22}). 

We remark that $ \crBB{2}$ has also an extended crystal structure and its crystal operators are described in \cite[Section 7.3]{KP22} in a combinatorial manner.

\end{example}

\vskip 2em

\section{Crystals and PBW bases} \label{Sec: crystals and PBW}

In this section, we shall investigate several properties about crystals and PBW bases.

From now on, we assume that $\cmA = (a_{i,j})_{i,j\in I}$ is a Cartan matrix of finite type. Let  $B(\infty)$ be the crystal of the negative half of the quantum group associated with $\cmA$. 
For $i\in I$, we denote Lusztig's braid symmetries by 
$$
\lT_i := T_{i,-1}' \quad \text{ and } \quad \lTs_i := T_{i,1}'',
$$ 
where $T_{i,-1}'$ and $ T_{i,1}''$ are the symmetries defined in \cite[Chapter 37.1]{Lu93}. 
Let $\ell := \ell(w_0)$ and let $\bfi = (i_1, i_2, \ldots, i_\ell) \in \rR(w_0)$.
For any $\epsilon = \pm 1$, $c \in \Z_{ \ge 0}$ and $k=1, \ldots, \ell$, we set 
$\beta_k := s_{i_1} \cdots s_{i_{k-1}}(\al_{i_k}) \in \prD$ and 
$$
f_{\bfi, \epsilon} (\beta_k) ^{(c)} := 
\begin{cases}
	\lT_{i_1}\lT_{i_2} \cdots \lT_{i_{k-1}}(f_{i_k}^{(c)})  	 & \text{ if } \epsilon = -1, \\
	\lTs_{i_1}\lTs_{i_2} \cdots \lTs_{i_{k-1}}(f_{i_k}^{(c)})   & \text{ if } \epsilon = 1.
\end{cases}
$$ 
For any $\bfa=(a_1, \ldots, a_\ell) \in \Z_{\ge 0}^{\oplus \ell}$, we define 
$$
f_{\bfi, \epsilon} (\bfa) := 
\begin{cases}
	f_{\bfi, \epsilon} (\beta_\ell) ^{(a_\ell)} f_{\bfi, \epsilon} (\beta_{\ell-1}) ^{(a_{\ell-1})} \cdots 	f_{\bfi, \epsilon} (\beta_{1}) ^{(a_{1})}  & \text{ if } \epsilon = -1, \\
	f_{\bfi, \epsilon} (\beta_{1}) ^{(a_{1})}  \cdots f_{\bfi, \epsilon} (\beta_{\ell-1}) ^{(a_{\ell-1})}   f_{\bfi, \epsilon} (\beta_\ell) ^{(a_\ell)}   & \text{ if } \epsilon = 1.
\end{cases}
$$ 
The set $\{ f_{\bfi, \epsilon} (\bfa) \}_{\bfa \in \Z_{\ge0}^{ \oplus \ell}}$ is called the \emph{PBW basis} of $ U_q^-(\g)$ with respect to $\bfi$ and $\epsilon$. 

We now consider the dual version of the PBW basis. For any $\bfa \in \Z_{\ge 0}^{\oplus \ell}$,  we define
$$
f^{\up}_{\bfi, \epsilon} (\bfa) := f_{\bfi, \epsilon} (\bfa)/( f_{\bfi, \epsilon} (\bfa),f_{\bfi, \epsilon} (\bfa) ),
$$
where $(-,-)$ is the Kashiwara bilinear form defined in \cite[Proposition 3.4.4]{K91}. 
 Then the set $\{ f^{\up}_{\bfi, \epsilon} (\bfa)  \}_{\bfa \in \Z_{\ge0}^{\oplus\ell}}$ becomes a basis of $ U_q^-(\g)$, which is called the \emph{dual PBW basis} with respect to $\bfi$ and $\epsilon$. Note that  
$$
f^{\up}_{\bfi, \epsilon} (\bfa) = 
\begin{cases}
	f_{\bfi, \epsilon}^{\up} (\beta_\ell) ^{\{a_\ell\}} f_{\bfi, \epsilon}^{\up} (\beta_{\ell-1}) ^{\{a_{\ell-1}\}} \cdots 	f_{\bfi, \epsilon}^{\up} (\beta_{1}) ^{\{a_{1}\}}  & \text{ if } \epsilon = -1, \\
	f_{\bfi, \epsilon}^{\up} (\beta_{1}) ^{\{a_{1}\}}  \cdots f_{\bfi, \epsilon}^{\up} (\beta_{\ell-1}) ^{\{a_{\ell-1}\}}   f_{\bfi, \epsilon}^{\up} (\beta_\ell) ^{\{a_\ell\}}   & \text{ if } \epsilon = 1,
\end{cases}
$$
where we define $ f^\up_{\bfi, \epsilon}(\beta_k)^{\{a\}} := q_{i_k}^{a(a-1)/2} f^\up_{\bfi, \epsilon}(\beta_k)^a$. Note that the dual PBW vectors $f^\up_{\bfi, \epsilon}(\beta_k)^{\{a\}}$ are contained in $\Gup(\infty)$.

\medskip

For any $i\in I$, we set 
\begin{align*}
	\iB &:= \{b\in \B \mid \ep_i(b) = 0  \}, \\
    \Bi &:= \{b\in \B \mid \eps_i(b) = 0  \}. 
\end{align*}
The \emph{Saito crystal reflections} (see \cite{Saito94}) on the crystal $B(\infty)$ are defined as follows 
\begin{align*}
	\cT_i: &\ \iB \rightarrow \Bi, \qquad \cT_i(b) := \tf_i ^{ \vphi_i^*(b)} \te_i^{* \hskip 0.1em  \ep_i^*(b)}(b), \\
	\cTs_i: &\ \Bi \rightarrow \iB, \qquad \cTs_i(b) := \tf_i ^{* \hskip 0.1em  \vphi_i(b)} \te_i^{\ep_i(b)}(b).	
\end{align*} 
Note that $ \cT_i \circ \cTs_i = \id $ and $ \cTs_i \circ \cT_i = \id $. The crystal reflections $ \cT_i$ and $\cTs_i$ are the crystal counterparts of the braid symmetries $ \lT_i $ and $\lTs_i$. 
Let $\ipi: \B \rightarrow \iB$ and $\pii: \B \rightarrow \Bi$ be the surjective maps defined by 
$$
\ipi(b) := \tem_i (b) \quad \text{ and } \quad  \pii(b) := \tesm_i (b) \qquad \text{ for } b\in \B ,
$$
and we set 
\begin{align*}
\tcT_i	&:= \cT_i \circ \ipi : \B \longrightarrow \Bi \subset \B, \\
\tcTs_i	&:= \cTs_i \circ \pii : \B \longrightarrow \iB \subset \B.
\end{align*}	
For any $\bfj = (j_1, \ldots, j_t) \in \rR(w)$, we define 
$$
\tcT_\bfj := \tcT_{j_1} \tcT_{j_2} \cdots \tcT_{j_t} \quad \text{ and }\quad   \tcTs_\bfj := \tcTs_{j_1} \tcTs_{j_2} \cdots \tcTs_{j_t}. 
$$

\begin{definition} [Lusztig datum] \label{Def: LD}
For $b \in B(\infty)$ and $ \bfi = (i_1, \ldots, i_\ell) \in \rR(w_0) $, we define 
\begin{align*}
\DcL_{\bfi, \epsilon}(b) :=  
\begin{cases}
	\left( \eps_{i_\ell} \left( \tcTs_{i_{\ell-1}} \cdots \tcTs_{i_2}\tcTs_{i_1}(b) \right),\,  \ldots, \ \eps_{i_2} \left( \tcTs_{i_1}(b) \right),\   \eps_{i_1}(b)   \right) & \text{ if } \epsilon = -1, \\
	\left( \ep_{i_1}(b),\ \ep_{i_2} \left( \tcT_{i_1}(b) \right), \  \ldots,\  \ep_{i_\ell} \left( \tcT_{i_{\ell-1}} \cdots \tcT_{i_2}\tcT_{i_1}(b)  \right)   \right)  & \text{ if } \epsilon = 1.
\end{cases}
\end{align*}
\end{definition}

We remark that
\begin{align} \label{Eq: Gup PBW}
\Gup(b) \equiv f^{\up}_{\bfi, \epsilon} (\bfa) \mod qL(\infty) \qquad \text{for $b \in B(\infty)$,} 
\end{align}
where $\bfa = \DcL_{\bfi, \epsilon}(b) $ (see \cite{Lu90,Lu93}, and see also \cite[Theorem 4.29]{Kimura12} and \cite[Section 3.2.2]{Kimura17}).
By the PBW theory, it is easy to see that the map
\begin{align*}
	\DcL_{\bfi, \epsilon} :   B(\infty)  \buildrel \sim \over \longrightarrow \Z_{\ge 0}^{\oplus\ell}, \qquad b \mapsto \DcL_{\bfi, \epsilon}(b) 
\end{align*}
is bijective and the inverse of $\DcL_{\bfi, \epsilon}$ is given by 
$$
 \Gup \left( \DcL_{\bfi, \epsilon}^{-1}  (\bfa) \right) \equiv  f^{\up}_{\bfi, \epsilon} (\bfa) \mod qL(\infty).
$$

\begin{lemma}\label{Lem: two ld}	
Let $ \bfi = (i_1, i_2,  \ldots, i_\ell) \in \rR(w_0) $ and set $\bfi^\vee := (i_\ell^*, \ldots, i_2^*,  i_1^*) $.	
Then, for any $b \in B(\infty)$, we have 
\begin{align*}
	\DcL_{\bfi, -1}(b)	= \DcL_{\bfi^\vee, 1}(b).
\end{align*}
\end{lemma}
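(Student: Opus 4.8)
The plan is to deduce the equality from the PBW description \eqref{Eq: Gup PBW}, which identifies $\DcL_{\bfi,\epsilon}(b)$ with the coordinate vector of the upper global basis element $\Gup(b)$ in the dual PBW basis $\{f^{\up}_{\bfi,\epsilon}(\bfa)\}_\bfa$. Thus it suffices to show that the two dual PBW bases attached to $(\bfi,-1)$ and $(\bfi^\vee,1)$ coincide, compatibly with the indexing built into Definition \ref{Def: LD}; then the expansion of a fixed $\Gup(b)$ modulo $qL(\infty)$ is the same in both conventions, and the two Lusztig data must agree. First I would record that $\bfi^\vee\in\rR(w_0)$: from $\al_{i^*}=-w_0(\al_i)$ one gets $s_{i^*}=w_0 s_i w_0$, and since $w_0=s_{i_1}\cdots s_{i_\ell}=w_0^{-1}$ a short computation yields $s_{i_\ell^*}\cdots s_{i_1^*}=w_0$, so the right-hand datum is well defined.

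Write $\bfi^\vee=(j_1,\ldots,j_\ell)$ with $j_p=i_{\ell-p+1}^*$, and let $\beta_k=s_{i_1}\cdots s_{i_{k-1}}(\al_{i_k})$ and $\gamma_p=s_{j_1}\cdots s_{j_{p-1}}(\al_{j_p})$ be the positive roots enumerated by the two words. The first step is a Weyl-group computation showing that $\bfi^\vee$ realizes the \emph{reverse} convex order, namely $\gamma_p=\beta_{\ell-p+1}$ for all $p$; this follows from the same three identities ($\al_{i^*}=-w_0\al_i$, $s_{i^*}=w_0 s_iw_0$, $w_0=s_{i_1}\cdots s_{i_\ell}$) together with $w_0 s_{i_\ell}\cdots s_{i_{k+1}}=s_{i_1}\cdots s_{i_k}$. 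The second, and main, step is the coincidence of the PBW root vectors attached to matching roots, i.e.\ $f_{\bfi,-1}(\beta_k)=f_{\bfi^\vee,1}(\gamma_{\ell-k+1})$ for every $k$, which unwinds to
\[
\lT_{i_1}\lT_{i_2}\cdots\lT_{i_{k-1}}(f_{i_k})=\lTs_{i_\ell^*}\lTs_{i_{\ell-1}^*}\cdots\lTs_{i_{k+1}^*}(f_{i_k^*}).
\]

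Granting these two steps, the conclusion follows: both $f_{\bfi,-1}(\bfa)$ and $f_{\bfi^\vee,1}(\bfa)$ are ordered products of the same root powers along the common (reverse) convex order, and because the $\epsilon=-1$ and $\epsilon=+1$ conventions write these products in opposite directions while $\bfi^\vee$ reverses the indexing relative to $\bfi$, the two reversals cancel and $\DcL_{\bfi,-1}(b)$ and $\DcL_{\bfi^\vee,1}(b)$ come out equal rather than reversed. I expect the displayed root-vector identity to be the main obstacle: it is precisely the interplay between Lusztig's two braid symmetries $T'_{\cdot,-1}$ and $T''_{\cdot,1}$ under the $*$-anti-automorphism and conjugation by $w_0$, and I would prove it by invoking the relevant compatibilities from \cite[Chapters~37--38]{Lu93}, using as an additional constraint the fact recorded just after \eqref{Eq: Gup PBW} that every dual PBW root power $f^{\up}_{\bfi,\epsilon}(\beta_k)^{\{a\}}$ lies in the convention-independent basis $\Gup(\infty)$. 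As an alternative, more crystal-theoretic route I would first establish $\tcTs_i\circ *=*\circ\tcT_i$ (which follows from $\eps_i(b)=\ep_i(b^*)$, $\vphi_i(b^*)=\vphi_i^*(b)$, and $\pii\circ *=*\circ\ipi$) and then combine it with the global symmetry of the full $w_0$-product of crystal reflections to verify the component-wise identity $\eps_{i_r}(\tcTs_{i_{r-1}}\cdots\tcTs_{i_1}(b))=\ep_{i_r^*}(\tcT_{i_{r+1}^*}\cdots\tcT_{i_\ell^*}(b))$ for each $r$ directly.
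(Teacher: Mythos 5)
Your argument follows the paper's proof essentially verbatim: both reduce the statement, via \eqref{Eq: Gup PBW}, to the coincidence of the PBW root vectors attached to matching roots, i.e.\ to your displayed identity $\lT_{i_1}\cdots\lT_{i_{k-1}}(f_{i_k})=\lTs_{i_\ell^*}\cdots\lTs_{i_{k+1}^*}(f_{i_k^*})$. The only difference is that the paper derives this identity in two lines from the rotation property \eqref{Eq: rxw rot} (which puts $(i_{k+1}^*,\ldots,i_\ell^*,i_1,\ldots,i_k)$ in $\rR(w_0)$ and hence gives $f_{i_k^*}=\lT_{i_{k+1}^*}\cdots\lT_{i_\ell^*}\lT_{i_1}\cdots\lT_{i_{k-1}}(f_{i_k})$), whereas you defer it to Lusztig's compatibilities --- a citation-level difference, not a different route.
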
	
\begin{proof}
Thanks to \eqref{Eq: rxw rot}, we have 
$$ 
(i_{k+1}^*, i_{k+2}^*,  \cdots, i_\ell^*, i_1, \ldots,  i_{k} ) \in \rR(w_0)   
$$
for any $k=0,1, \ldots, \ell$. 
Since 
$$
f_{i_k^*} = \lT_{ i_{k+1}^* }\lT_{i_{k+2}^*} \cdots \lT_{ i_{\ell}^* } \lT_{i_1} \cdots \lT_{i_{k-1}} (f_{i_k}),
$$
we have 
$$
\lTs_{ i_{\ell}^* } \cdots \lTs_{ i_{k+1}^* } (f_{i_k^*}) =  \lT_{i_1} \cdots \lT_{i_{k-1}} (f_{i_k}),
$$
which implies that 
$$
f_{\bfi^\vee, 1} (\beta_k^\vee) = f_{\bfi, -1} (\beta_k),
$$
where $\beta_k^\vee := s_{i_\ell^*} \cdots s_{i_{k+1}^*}(\alpha_k)$ and $\beta_k := s_{i_1} \cdots s_{i_{k-1}}(\alpha_k)$.
Therefore, since both of the PBW vectors are the same,  the assertion follows from \eqref{Eq: Gup PBW}.
\end{proof}

\begin{lemma} \label{Lem: T_i and ld}
Let $\bfi = (i_1, i_2, \ldots, i_\ell) \in \rR(w_0)$ and set $\bfj := (i_2, \ldots, i_\ell, i_1^*) $.	
Let $b \in B(\infty)$ and $t \in \Z_{\ge0}$.  
\bni
\item If we write $ \DcL_{\bfi, -1}(b) = ( m_{\ell}, m_{\ell-1}, \ldots, m_2, m_1) $, then  
$$
 \eps_{i_1}(b) = m_1, \qquad  \DcL_{\bfj, -1} \left( \tf_{i_1}^t \left( \tcTs_{i_1} (b) \right) \right)  = ( t,  m_{\ell}, m_{\ell-1}, \ldots, m_2).
$$
\item 
If we write $ \DcL_{\bfi, 1}(b) = (n_{1}, n_{2}, \ldots, n_{\ell-1}, n_\ell) $, then  
$$
\ep_{i_1}(b) = n_1, \qquad  \DcL_{\bfj, 1} \left( \tfss{t}_{i_1} \left( \tcT_{i_1} (b) \right) \right)  = (  n_2, \ldots, n_{\ell-1}, n_\ell, t).
$$
\ee
\end{lemma}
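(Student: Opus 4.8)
The two identities $\eps_{i_1}(b)=m_1$ in (i) and $\ep_{i_1}(b)=n_1$ in (ii) are immediate from Definition \ref{Def: LD}: by construction the last entry of $\DcL_{\bfi,-1}(b)$ is $\eps_{i_1}(b)$ and the first entry of $\DcL_{\bfi,1}(b)$ is $\ep_{i_1}(b)$. For the remaining assertions the plan is to argue through the dual PBW basis, in the spirit of the proof of Lemma \ref{Lem: two ld}. Concretely, for (i) it suffices, by the bijectivity of $\DcL_{\bfj,-1}$ together with the characterization \eqref{Eq: Gup PBW}, to establish the single congruence $\Gup(b')\equiv f^{\up}_{\bfj,-1}(t,m_\ell,\ldots,m_2)\bmod qL(\infty)$, where $b'=\tf_{i_1}^{t}(\tcTs_{i_1}(b))$.

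First I would record the relevant root combinatorics. By \eqref{Eq: rxw rot} we have $\bfj\in\rR(w_0)$; writing $\beta_k=s_{i_1}\cdots s_{i_{k-1}}(\al_{i_k})$ for $\bfi$ and $\beta'_k$ for the analogous roots of $\bfj$, a short Weyl-group computation using $\al_{i_1^*}=-w_0(\al_{i_1})$ gives $\beta'_\ell=\al_{i_1}$ together with $\beta'_k=s_{i_1}(\beta_{k+1})$ for $1\le k\le\ell-1$. At the level of root vectors this translates, via $\lTs_{i_1}=\lT_{i_1}^{-1}$ and the definitions of $f_{\bfi,-1}$ and $f_{\bfj,-1}$, into $f_{\bfj,-1}(\beta'_\ell)=f_{i_1}$ and $f_{\bfj,-1}(\beta'_k)=\lTs_{i_1}\bigl(f_{\bfi,-1}(\beta_{k+1})\bigr)$ for $1\le k\le\ell-1$, which is exactly the mechanism used in Lemma \ref{Lem: two ld}.

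I would then track the three operations defining $b'$ on the dual PBW monomial of $b$. Since $\beta_1=\al_{i_1}$, the rightmost factor of $f^{\up}_{\bfi,-1}(\DcL_{\bfi,-1}(b))$ is $f^{\up}_{\bfi,-1}(\beta_1)^{\{m_1\}}$, so applying $\tesm_{i_1}$ (the action of $\pi_{i_1}$, which strips the $*$-string of length $m_1$) sets the last coordinate to $0$ and gives $\Gup(\tesm_{i_1}(b))\equiv f^{\up}_{\bfi,-1}(\beta_\ell)^{\{m_\ell\}}\cdots f^{\up}_{\bfi,-1}(\beta_2)^{\{m_2\}}$. Next, on $B_{i_1}(\infty)=\{\eps_{i_1}=0\}$ the Saito reflection $\cTs_{i_1}$ is realized by Lusztig's symmetry $\lTs_{i_1}$ acting on the dual canonical basis modulo $qL(\infty)$ (cf.\ \cite{Saito94, Lu93}); applying $\lTs_{i_1}$ to the monomial above and using the root-vector identities converts it into $f^{\up}_{\bfj,-1}(\beta'_{\ell-1})^{\{m_\ell\}}\cdots f^{\up}_{\bfj,-1}(\beta'_1)^{\{m_2\}}$, that is, $\DcL_{\bfj,-1}(b_1)=(0,m_\ell,\ldots,m_2)$ with $b_1=\tcTs_{i_1}(b)$. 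Finally, since $\beta'_\ell=\al_{i_1}$ is the leading slot for $\bfj$ and $\tf_{i_1}$ raises precisely the Lusztig coordinate attached to $\al_{i_1}$, applying $\tf_{i_1}^{t}$ lifts this leading coordinate from $0$ to $t$, yielding the desired congruence and hence (i).

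Part (ii) I would obtain by the mirror-image argument: replace $\cTs_i,\lTs_i,\tesm_{i_1},\tf_{i_1}$ by $\cT_i,\lT_i,\tem_{i_1},\tfss{t}_{i_1}$ and use the $\epsilon=1$ convention, where the leftmost factor is $\beta_1=\al_{i_1}$ (so $\tem_{i_1}$ removes the first coordinate $n_1$) and the new slot $\beta'_\ell=\al_{i_1}$ is rightmost (so $\tf_{i_1}^{*}$ raises the last coordinate to $t$); the same three steps then give the claimed tuple. Alternatively one may deduce (ii) from (i) via the $*$-involution, using $\tcT_i(b^*)=(\tcTs_i(b))^*$ and the compatibility of $*$ with the two Lusztig data. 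The step I expect to be most delicate is the middle one: making precise that $\lTs_{i_1}$ sends the normalized dual PBW vectors $f^{\up}_{\bfi,-1}(\beta_{k+1})^{\{a\}}$ to $f^{\up}_{\bfj,-1}(\beta'_k)^{\{a\}}$ and implements $\cTs_{i_1}$ on $B_{i_1}(\infty)$ \emph{exactly} modulo $qL(\infty)$, controlling the divided-power normalizations and the sign and lattice ambiguities intrinsic to Lusztig's braid symmetries; this is where I would rely on the established results of \cite{Saito94} and \cite[Chapter 37]{Lu93}.
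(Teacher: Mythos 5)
Your argument is correct, but it takes a genuinely different (and heavier) route than the paper's. The paper never manipulates PBW vectors in this proof: since Definition \ref{Def: LD} computes $\DcL_{\bfi,-1}$ by iterating $\eps$-values along the Saito reflections, and $\tcTs_{i_\ell}\cdots\tcTs_{i_1}(b)=\hv$, the identity $\DcL_{\bfj,-1}\bigl(\tcTs_{i_1}(b)\bigr)=(0,m_\ell,\ldots,m_2)$ is an immediate bookkeeping consequence of the definition --- passing from $\bfi$ to $\bfj$ merely shifts the list of reflections by one. The effect of $\tf_{i_1}^t$ is then read off by the convention switch of Lemma \ref{Lem: two ld}: $\DcL_{\bfj,-1}=\DcL_{\bfj^\vee,1}$ with $\bfj^\vee=(i_1,i_\ell^*,\ldots,i_2^*)$, whose leading entry is by definition $\ep_{i_1}$; since $\ep_{i_1}(\tcTs_{i_1}(b))=0$ and the remaining entries depend only on $\tcT_{i_1}(\cdot)$, applying $\tf_{i_1}^t$ changes only that leading entry, from $0$ to $t$. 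Your proof instead re-derives the middle step at the level of dual PBW monomials, which obliges you to invoke that $\lTs_{i_1}$ implements $\cTs_{i_1}$ on the dual canonical basis modulo $qL(\infty)$ --- a true and citable fact, but it is precisely the content already packaged into \eqref{Eq: Gup PBW} together with Definition \ref{Def: LD}, so you are partly re-proving machinery the paper takes as given. (Your root computation $\beta'_\ell=\al_{i_1}$, $\beta'_k=s_{i_1}(\beta_{k+1})$ is the same one the paper uses inside Lemma \ref{Lem: two ld}.) What you lose is economy and you inherit the normalization subtleties you rightly flag; what you gain is an explicit picture of where each PBW factor goes. The one step I would ask you to tighten is the last: ``$\tf_{i_1}$ raises precisely the Lusztig coordinate attached to $\al_{i_1}$'' in the leading slot is cleanest to justify exactly by the convention switch above, rather than as an unexamined property of PBW coordinates.
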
	
\begin{proof}
(i)
It follows from Definition \ref{Def: LD} and the bijection $\DcL_{\bfi, \epsilon}$ that 
$$ 
\eps_{i_1}(b) = m_1 \quad \text{ and }\quad \DcL_{\bfi, -1} ( \tem_i b) = ( m_{\ell}, m_{\ell-1}, \ldots, m_2, 0).
$$
Since $ \tcTs_{i_{\ell}} \tcTs_{i_{\ell-1}} \cdots \tcTs_{i_2}\tcTs_{i_1}(b) 	= \one \in B(\infty)$, Definition \ref{Def: LD}  says that 
$$ 
  \DcL_{\bfj, -1} \left( \tcTs_{i_{1}} (b) \right) = ( 0, m_{\ell}, m_{\ell-1}, \ldots, m_2). 
$$
Since  Lemma \ref{Lem: two ld} tells us that $   \DcL_{\bfj, -1} \left( \tcTs_{i_{1}} b \right) =   \DcL_{\bfj^\vee, 1} \left( \tcTs_{i_{1}} b \right)$ and $\bfj^\vee = (i_1, i_\ell^*, \ldots, i_2^*) $, 
we obtain 
$$
\DcL_{\bfj, -1} \left( \tf_{i_1}^t \left( \tcTs_{i_1} (b) \right) \right)  = ( t,  m_{\ell}, m_{\ell-1}, \ldots, m_2).
$$

(ii) It can be proved by the same argument.
\end{proof}	

\vskip 2em

\section{Braid group action on $\hBi$} \label{Sec: braid group actions}

We keep the notations given in the previous section. 
Let $\hBi$ be the extended crystal of the crystal $B(\infty)$.
In this section, we show that there exists a braid group action on $\hBi$ arising from the Saito crystal reflections.

For $i\in I$ and $\cb = (b_k)_{k\in \Z} \in \hBi$, we define 
$$
\bR_i(\cb) = (b_k')_{k\in \Z}\qquad  \text{ and } \qquad \bRs_i(\cb) = (b_k'')_{k\in \Z}
$$ 
 by
\begin{align*}
b_k' := \tfss{ \ep_i(b_{k-1}) }_i  \left( \tcT_{i}(b_k) \right) \qquad \text{ and }\qquad 
b_k'' := \tf^{ \eps_i(b_{k+1}) }_i  \left( \tcTs_{i}(b_k) \right)
\end{align*}	
for any $k\in \Z$ respectively. Thus we have the maps 
$$
\bR_i : \hBi \longrightarrow \hBi \qquad \text{ and }\qquad 
\bRs_i : \hBi \longrightarrow \hBi.
$$
For any $\bfj = (j_1, \ldots, j_t) \in I^t$, we simply write 
\begin{align} \label{Eq: bR_bfi}
\bR_\bfj := \bR_{j_1} \bR_{j_2} \cdots \bR_{j_t} \quad \text{ and }\quad   \bRs_\bfj := \bRs_{j_1} \bRs_{j_2} \cdots \bRs_{j_t}. 
\end{align}

\begin{prop} \label{Prop: R and ld}
Let $\bfi = (i_1, i_2 \ldots, i_\ell) \in \rR(w_0)$ and set 
$
\bfj := ( i_{2}, i_{3}, \ldots, i_\ell, i_1^*).
$
Let $\cb =(b_k)_{k\in \Z} \in \hBi$ and write 
\begin{align*}
 \DcL_{\bfi, 1}(b_k) &= (n_{k, 1}, n_{k, 2}, \ldots, n_{k, \ell-1}, n_{k, \ell}), \\ 
 \DcL_{\bfi, -1}(b_k) &= (m_{k, 1}, m_{k, 2}, \ldots, m_{k, \ell-1}, m_{k, \ell}),
\end{align*}
for any $k\in \Z$.
\bni
\item  
If we write $\bR_{i_1}(\cb) = (b_k')_{k\in \Z} \in \hBi$, then  we have
\begin{align*}
	\DcL_{\bfj, 1} (b_k') = ( n_{k, 2}, n_{k, 3}, \ldots, n_{k, \ell}, n_{k-1, 1}) \qquad \text{ for any $k\in \Z$.}
\end{align*}
\item  If we write $\bRs_{i_1}(\cb) = (b_k'')_{k\in \Z} \in \hBi$, then we have
\begin{align*}
	\DcL_{\bfj, -1} (b_t'') = ( m_{k+1, \ell}, m_{k, 1}, m_{k, 2}, \ldots, m_{k, \ell-1} ) \qquad \text{for any $k \in \Z$.}
\end{align*}
\ee	
\end{prop}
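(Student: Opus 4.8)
The plan is to deduce both statements directly from Lemma~\ref{Lem: T_i and ld}, handling one index $k\in\Z$ at a time. First I would record that $\bfj\in\rR(w_0)$ by \eqref{Eq: rxw rot}, so that the Lusztig data $\DcL_{\bfj,\pm1}$ appearing in the conclusion are well defined. The core observation is that the definitions of $\bR_{i_1}$ and $\bRs_{i_1}$ are built precisely so that, after applying the crystal reflection $\tcT_{i_1}$ (resp.\ $\tcTs_{i_1}$) to the $k$-th component, the surviving exponent of $\tf_{i_1}^{*}$ (resp.\ $\tf_{i_1}$) is exactly one entry of the Lusztig datum of the \emph{neighbouring} component. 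Thus the whole argument should reduce to unwinding the definition and substituting into the lemma.

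For part (i), I would fix $k$ and start from $b_k' = \tfss{\ep_{i_1}(b_{k-1})}_{i_1}\bl\tcT_{i_1}(b_k)\br$. The first identity of Lemma~\ref{Lem: T_i and ld}(ii), applied to $b_{k-1}$, identifies the exponent as $\ep_{i_1}(b_{k-1}) = n_{k-1,1}$. I would then invoke the second identity of the same lemma for $b_k$ with $t = n_{k-1,1}$; since $\DcL_{\bfi,1}(b_k) = (n_{k,1},\dots,n_{k,\ell})$, this yields at once
\[
\DcL_{\bfj,1}(b_k') = (n_{k,2},\dots,n_{k,\ell},n_{k-1,1}).
\]

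Part (ii) is the mirror computation via Lemma~\ref{Lem: T_i and ld}(i), and this is where I expect the only genuine difficulty to lie: the $\epsilon=-1$ datum is indexed in the \emph{opposite} order in the lemma and in the proposition. Here $\DcL_{\bfi,-1}(b_k) = (m_{k,1},\dots,m_{k,\ell})$ places the $\eps_{i_1}$-entry last (as $m_{k,\ell}$), whereas Lemma~\ref{Lem: T_i and ld}(i) writes its datum as $(m_\ell,\dots,m_1)$, with the $\eps_{i_1}$-entry $m_1$ last. Starting from $b_k'' = \tf_{i_1}^{\eps_{i_1}(b_{k+1})}\bl\tcTs_{i_1}(b_k)\br$, the first identity of Lemma~\ref{Lem: T_i and ld}(i) applied to $b_{k+1}$ gives $\eps_{i_1}(b_{k+1}) = m_{k+1,\ell}$; feeding $t = m_{k+1,\ell}$ into the second identity for $b_k$ and then rewriting its output $(t,m_\ell,\dots,m_2)$ in the proposition's ordering should produce
\[
\DcL_{\bfj,-1}(b_k'') = (m_{k+1,\ell},\,m_{k,1},\,m_{k,2},\dots,m_{k,\ell-1}).
\]

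The substantive content is therefore entirely carried by Lemma~\ref{Lem: T_i and ld}, and the main obstacle is just the careful bookkeeping of this order reversal together with the correct identification of the exponent with an entry of the adjacent block's datum; once the two orderings are aligned, both formulas are immediate. I would also note in passing that these formulas re-confirm that $\bR_{i_1}(\cb),\bRs_{i_1}(\cb)\in\hBi$, since the output datum vanishes as soon as the relevant neighbouring components equal $\one$, so only finitely many entries of $\bR_{i_1}(\cb)$ and $\bRs_{i_1}(\cb)$ differ from $\one$.
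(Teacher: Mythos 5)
Your proposal is correct and follows exactly the paper's argument: part (i) is obtained by applying both identities of Lemma~\ref{Lem: T_i and ld}(ii) (first to $b_{k-1}$ to identify the exponent as $n_{k-1,1}$, then to $b_k$ with $t=n_{k-1,1}$), and part (ii) is the mirror computation via Lemma~\ref{Lem: T_i and ld}(i). The paper dispatches (ii) with ``by the same argument''; your explicit bookkeeping of the reversed ordering of the $\epsilon=-1$ datum is the correct way to carry that out.
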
	
\begin{proof}
By Lemma \ref{Lem: T_i and ld} (ii), we have $\ep_{i_1} (b_{k-1}) = n_{k-1,1}$ and  
$$
\DcL_{\bfj, 1} (b_k') = \DcL_{\bfj, 1} \left ( \tfss{ \ep_{i_1}(b_{k-1}) }_{i_1}  \left( \tcT_{i_1}(b_k) \right) \right) = 
 ( n_{k, 2}, n_{k, 3}, \ldots, n_{k, \ell}, n_{k-1, 1}),
$$
which gives (i).

By the same argument, one can prove (ii). 	
\end{proof}

\begin{lemma} \label{Lem: R R*} 
	Let $i\in I$.
\bni 	
\item  $\bR_i$ and $\bRs_i$ are bijective.
\item $\bR_i$ and $\bRs_i$ are inverse to each other.  	
\item $\bR_i \circ \cd = \cd \circ   \bR_i $ and $\bRs_i \circ \cd = \cd \circ   \bRs_i $.
\item For any $\cb \in \hBi$, we have 
$$
 \hwt ( \bR_i (\cb) ) = s_i ( \hwt(\cb) ) \quad \text{ and }\quad  \hwt ( \bRs_i (\cb) ) = s_i ( \hwt(\cb) ).
$$

\ee
\end{lemma}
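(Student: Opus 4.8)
The plan is to prove (ii) by a direct computation, which yields (i) for free, and then to deduce (iii) from shift-equivariance and (iv) from a telescoping weight calculation. The genuinely delicate part is (ii).

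For (i) and (ii) it suffices to show $\bRs_i\circ\bR_i=\id$ and $\bR_i\circ\bRs_i=\id$, since a two-sided inverse is automatically a bijection. The two ingredients are the Saito identities $\cTs_i\circ\cT_i=\id$ on $\iB$ and $\cT_i\circ\cTs_i=\id$ on $\Bi$, together with the containments $\tcT_i(b)\in\Bi$ (so $\eps_i(\tcT_i(b))=0$) and $\tcTs_i(b)\in\iB$ (so $\ep_i(\tcTs_i(b))=0$) built into $\tcT_i=\cT_i\circ\ipi$ and $\tcTs_i=\cTs_i\circ\pii$. (Bijectivity alone also drops out of Proposition~\ref{Prop: R and ld}, since $\bR_{i_1}$ there becomes an explicit invertible shift of Lusztig data; but I prefer the direct route, which simultaneously identifies the inverse.)

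Concretely, write $\bR_i(\cb)=(b_k')_{k\in\Z}$ with $b_k'=\tfss{\ep_i(b_{k-1})}_i(\tcT_i(b_k))$. Because $\tfs_i$ raises $\eps_i$ by one and $\eps_i(\tcT_i(b_k))=0$, I first record $\eps_i(b_k')=\ep_i(b_{k-1})$, hence $\eps_i(b_{k+1}')=\ep_i(b_k)$. Applying $\te_i^{*}$ exactly $\eps_i(b_k')=\ep_i(b_{k-1})$ times peels off the $\tfs_i$-string, so $\pii(b_k')=\tesm_i(b_k')=\tcT_i(b_k)$, and therefore $\tcTs_i(b_k')=\cTs_i(\cT_i(\tem_i(b_k)))=\tem_i(b_k)$ by $\cTs_i\circ\cT_i=\id$ on $\iB$. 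Feeding this into the definition of $\bRs_i$, the $k$-th entry of $\bRs_i(\bR_i(\cb))$ is $\tf_i^{\eps_i(b_{k+1}')}(\tcTs_i(b_k'))=\tf_i^{\ep_i(b_k)}(\tem_i(b_k))=b_k$, since $\tf_i^{\ep_i(b_k)}$ re-inserts precisely the $\tf_i$-string that $\tem_i=\te_i^{\ep_i(b_k)}$ removed. This gives $\bRs_i\circ\bR_i=\id$; the composite $\bR_i\circ\bRs_i=\id$ follows by the mirror-image computation interchanging $\ep_i\leftrightarrow\eps_i$, $\tf_i\leftrightarrow\tfs_i$, $\cT_i\leftrightarrow\cTs_i$ and $\iB\leftrightarrow\Bi$.

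Statement (iii) should be immediate: both $\cd$ and the formula defining $\bR_i$ refer to the index only through shifts, so unwinding the definitions presents the $k$-th entries of $\bR_i(\cd(\cb))$ and $\cd(\bR_i(\cb))$ as the same element $\tfss{\ep_i(b_{k-2})}_i(\tcT_i(b_{k-1}))$, and likewise for $\bRs_i$. For (iv) the key input is $\wt(\cT_i(b))=s_i(\wt(b))$ on $\iB$ and $\wt(\cTs_i(b))=s_i(\wt(b))$ on $\Bi$, which I would obtain from $\vphi_i^*(b)=\eps_i(b)+\langle h_i,\wt(b)\rangle$ (resp.\ $\vphi_i(b)=\ep_i(b)+\langle h_i,\wt(b)\rangle$) using that $\te_i,\te_i^{*}$ shift weight by $+\al_i$ while $\tf_i,\tfs_i$ shift it by $-\al_i$. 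Combined with $s_i(\al_i)=-\al_i$ this yields $\wt(b_k')=s_i(\wt(b_k))-(\ep_i(b_k)+\ep_i(b_{k-1}))\al_i$, so $\hwt(\bR_i(\cb))=s_i(\hwt(\cb))-\al_i\sum_{k}(-1)^k(\ep_i(b_k)+\ep_i(b_{k-1}))$; the alternating signs make the correction term telescope to $0$ after reindexing $\sum_k(-1)^k\ep_i(b_{k-1})=-\sum_k(-1)^k\ep_i(b_k)$, giving $\hwt(\bR_i(\cb))=s_i(\hwt(\cb))$. The $\bRs_i$ statement follows verbatim with $k-1$ replaced by $k+1$. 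I expect the main obstacle to be the exponent bookkeeping in (ii)---verifying that $\ep_i(b_{k-1})$ and $\eps_i(b_{k+1})$, which appear across the index shift, are exactly the powers needed to undo one another, which is what upgrades ``both maps are bijective'' to ``they are mutually inverse''.
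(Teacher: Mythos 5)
Your proof is correct, but for parts (i) and (ii) it takes a genuinely different route from the paper. The paper proves $\bRs_i\circ\bR_i=\id$ by passing to Lusztig data: it fixes $\bfi\in\rR(w_0)$ with $i_1=i$, invokes Proposition \ref{Prop: R and ld} twice (together with Lemma \ref{Lem: two ld} to switch between $\epsilon=\pm1$ conventions) to see that $\bR_i$ cyclically shifts the concatenated Lusztig data of the components and that $\bRs_i$ shifts them back, and concludes $b_k=b_k''$ from injectivity of $\DcL_{\bfi,\epsilon}$. You instead peel off the operator strings directly: from $\eps_i(\tcT_i(b_k))=0$ you get $\eps_i(b_k')=\ep_i(b_{k-1})$, hence $\tesm_i(b_k')=\tcT_i(b_k)$, and then the Saito identity $\cTs_i\circ\cT_i=\id$ on $\iB$ gives $\tcTs_i(b_k')=\tem_i(b_k)$, so the exponent $\eps_i(b_{k+1}')=\ep_i(b_k)$ is exactly what is needed to restore $b_k$. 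Your bookkeeping is right (in $B(\infty)$ one has $\eps_i(\tfs_ib)=\eps_i(b)+1$ and $\te_i^{*}\tfs_i=\id$, and dually), and the argument is more elementary and self-contained, requiring none of the PBW machinery. What the paper's route buys is that the same Lusztig-data computation is the engine of Theorem \ref{Thm: main1} (the braid relations), so the authors get the inverse statement essentially for free from apparatus they must set up anyway, and they also obtain the explicit action of $\bR_i$ on Lusztig data as a byproduct. Your proofs of (iii) and (iv) coincide with the paper's: (iii) is immediate from the definitions, and for (iv) the paper performs exactly your computation $\wt(b_k')=s_i(\wt(b_k))-(\ep_i(b_k)+\ep_i(b_{k-1}))\al_i$ followed by the telescoping of the alternating correction term.
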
	
\begin{proof}

We first show that $ \bRs_i \circ \bR_i = \id$.
We choose $\bfi = (i_1, i_2 \ldots, i_\ell) \in \rR(w_0)$ with $i=i_1$, and set
$$
\bfi^\vee := ( i_\ell^*, i_{\ell-1}^*, \ldots,   i_1^*), \qquad 
\bfj := ( i_{2}, i_{3}, \ldots, i_\ell, i_1^*), \qquad 
\bfj^\vee := ( i_{1}, i_{\ell}^*, \ldots, i_3^*, i_2^*).
$$
Let $\cb =(b_k)_{k\in \Z} \in \hBi$ and write 
$$
	\DcL_{\bfi, 1}(b_k) = (n_{k, 1}, n_{k, 2}, \ldots, n_{k, \ell-1}, n_{k, \ell})
$$
for any $k\in \Z$. We write 
$$
\bR_{i}(\cb) = (b_k')_{k\in \Z} \quad \text{ and } \quad  \bRs_{i} \circ \bR_{i}(\cb) = (b_k'')_{k\in \Z}.
$$
By Proposition \ref{Prop: R and ld} (i), we have 
\begin{align*}
	\DcL_{\bfj, 1} (b_k') = ( n_{k, 2}, n_{k, 3}, \ldots, n_{k, \ell}, n_{k-1, 1}) \qquad \text{ for any $k\in \Z$.}
\end{align*}
Since $\DcL_{\bfj^\vee, -1} (b_k') = \DcL_{\bfj, 1} (b_k') $ by Lemma \ref{Lem: two ld}, Proposition \ref{Prop: R and ld} (ii) tells us that 
$$
	\DcL_{\bfi^\vee, -1} (b_k'') = ( n_{k, 1}, n_{k, 2}, n_{k, 3}, \ldots, n_{k, \ell}) \qquad \text{ for any $k\in \Z$.}
$$
Therefore, Lemma \ref{Lem: two ld} says that $b_k = b_k ''$  for any $k \in \Z$, i.e., $ \bRs_i \circ \bR_i (\bfb) = \bfb $.

In the same manner, one can prove that $ \bR_i \circ \bRs_i = \id$.  Thus we have (i) and (ii).

(iii) It follows from the definitions of $ \bR_i$ and $ \bRs_i$.

(iv)
Let $ \cb = (b_k)_{k\in \Z} \in \hBi $ and write $ \bR_i(\cb) = (b_k')_{k\in \Z} $. For any $k\in \Z$, we have 
\begin{align*}
\wt_k(\bR_i(\cb)) &= (-1)^k \wt(b_k') = (-1)^k \wt \left ( \tfss{ \ep_i(b_{k-1}) }_i  \left( \tcT_{i}(b_k) \right) \right) \\
&= 	(-1)^k \wt \left ( \tfss{ \ep_i(b_{k-1}) }_i  \left( \cT_{i} \left( \te_i^{ \ep_i(b_k)}( b_k) \right) \right) \right) \\
&= (-1)^k ( s_i( \wt(b_k) + \ep_i( b_{k}) \al_i ) - \ep_i(b_{k-1}) \al_i ) \\
&= s_i( \wt_k( \cb )) +  \left((-1)^{k-1} \ep_i(b_{k-1})  - (-1)^{k} \ep_i(b_{k})  \right) \al_i,
\end{align*}	
which tells us that 
$$
\hwt( \bR_i(\cb)) = \sum_{k\in \Z} \wt_k( \bR_i(\cb))=
\sum_{k\in \Z} s_i ( \wt_k(  \cb)) =  s_i (\hwt(  \cb)).
$$

The case for $\bRs_i$ can be proved in the same manner.
\end{proof}	

\begin{remark} 
	Let $\star$ be the involution on $\hBi$ defined in \cite[Section 4]{KP22}, which is a counterpart of the involution $*$ on $B(\infty)$.
Using the fact that $ \cTs_i =  * \circ \cT_i \circ *$, one can prove that $ \bRs_i =  \star \circ \bR_i \circ \star $.
\end{remark}

Recall that $\cmA = (a_{i,j})_{i,j\in I}$ is a Cartan matrix of finite type.
For $i,j \in I$ with $i\ne j$, we set 
\begin{align} \label{Eq: m(i,j)}
m(i,j) := 
\begin{cases}
	2 & \text{ if } a_{i,j} a_{j, i} = 0,\\
	3 & \text{ if } a_{i,j} a_{j, i} = 1,\\
	4 & \text{ if } a_{i,j} a_{j, i} = 2,\\
	6 & \text{ if } a_{i,j} a_{j, i} = 3.
\end{cases}
\end{align}
Note that $m(i,j) = m(j,i)$ for any $i,j \in I$ with $i \ne j$.
 We denote by $\bg_\cmA$ the \emph{generalized braid group} (or \emph{Artin-Tits group}) defined by the generators $r_i$ ($i\in I$) and the following defining relations:
$$
\underbrace{r_{i}r_{j}r_{i}r_{j} \cdots}_{m(i,j) \text{ factors}} = \underbrace{r_{j}r_{i}r_{j}r_{i} \cdots}_{m(j,i) \text{ factors}} 
\qquad \text{ for $i,j\in I$ with $i\ne j$.}
$$ 
We set $\bg_\cmA^+$ to be the submonoid of $\bg_\cmA$ generated by $r_i$ ($i\in I$).
We call $ \bg_\cmA$ the braid group associated with $\cmA$. 
We simply write $\bg$ instead of $\bg_\cmA$ if no confusion arises.

\begin{theorem} \label{Thm: main1}
The bijections $\bR_i$ and $\bRs_i$ satisfy the braid  group relations for $ \bg$, i.e., for $i,j\in I$ with $i\ne j$,  
$$
\underbrace{\bR_{i}\bR_{j}\bR_{i}\bR_{j} \cdots}_{m(i,j) \text{ factors}} = \underbrace{\bR_{j}\bR_{i}\bR_{j}\bR_{i} \cdots}_{m(j,i) \text{ factors}} \quad \text{ and }\quad  
\underbrace{\bRs_{i}\bRs_{j}\bRs_{i}\bRs_{j} \cdots}_{m(i,j) \text{ factors}} = \underbrace{\bRs_{j}\bRs_{i}\bRs_{j}\bRs_{i} \cdots}_{m(j,i) \text{ factors}}.
$$	
\end{theorem}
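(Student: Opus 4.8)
The plan is to prove the braid relations for the maps $\bR_i$ only; the relations for $\bRs_i$ then follow formally. Indeed, by Lemma~\ref{Lem: R R*}(ii) we have $\bRs_i=\bR_i^{-1}$, and inverting the identity $\underbrace{\bR_i\bR_j\cdots}_{m(i,j)}=\underbrace{\bR_j\bR_i\cdots}_{m(i,j)}$ (reverse the order of factors and invert each) produces exactly the corresponding relation for the inverses $\bRs_i,\bRs_j$. So I fix $i\ne j$, set $m=m(i,j)$, and let $W_1=(i,j,i,\dots)$ and $W_2=(j,i,j,\dots)$ (each with $m$ letters) be the two reduced words of the longest element $w_{[i,j]}$ of the rank-two parabolic $\langle s_i,s_j\rangle\subseteq\weyl$. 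In the notation of \eqref{Eq: bR_bfi} the two sides of the desired identity are $\bR_{W_1}$ and $\bR_{W_2}$, so it suffices to show $\bR_{W_1}=\bR_{W_2}$.

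The engine is the iteration of Proposition~\ref{Prop: R and ld}(i). For $\bfi\in\rR(w_0)$ I set $\Psi_\bfi\col\hBi\to\prod_{k\in\Z}\Z_{\ge0}^{\oplus\ell}$, $\Psi_\bfi(\cb)=(\DcL_{\bfi,1}(b_k))_{k\in\Z}$, a bijection. Proposition~\ref{Prop: R and ld}(i) then reads $\bR_{i_1}=\Psi_{\bfj}^{-1}\circ\sigma\circ\Psi_\bfi$, where $\bfj=(i_2,\dots,i_\ell,i_1^*)$ is the left rotation of $\bfi$ (a reduced word by \eqref{Eq: rxw rot}) and $\sigma$ is the \emph{word-independent} rotate-and-carry shift $\sigma((v_k)_k)_k=(v_k^{(2)},\dots,v_k^{(\ell)},v_{k-1}^{(1)})$. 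Since $w_{[i,j]}$ is an involution, $\mathrm{rev}(W)$ is again a reduced word of $w_{[i,j]}$ and extends to some $(\mathrm{rev}(W),\bfc)\in\rR(w_0)$. Applying the generators while left-rotating the word at each step (using \eqref{Eq: rxw rot} repeatedly), the consecutive $\Psi$'s telescope and I would obtain
\[
\bR_{W}=\Psi_{(\bfc,\,\mathrm{rev}(W)^*)}^{-1}\circ\sigma^m\circ\Psi_{(\mathrm{rev}(W),\,\bfc)},
\]
where $\mathrm{rev}(W)^*$ applies $\zeta$ letterwise. Taking the \emph{same} tail $\bfc$ for $W_1$ and $W_2$, the input words $(\mathrm{rev}(W_1),\bfc)$ and $(\mathrm{rev}(W_2),\bfc)$ differ by the rank-two braid move in the first $m$ slots, while the output words differ by the braid move $\mathrm{rev}(W_1)^*\to\mathrm{rev}(W_2)^*$ in the last $m$ slots.

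Thus $\bR_{W_1}=\bR_{W_2}$ reduces to a single identity between Lusztig-data transition maps. Writing $\hat R_0$ (resp.\ $\hat R_m$) for the position-wise transition map relating the input (resp.\ output) words, each is well defined by \eqref{Eq: Gup PBW} and acts nontrivially only on the first $m$ (resp.\ last $m$) coordinates, since one braid move only reorders the roots of the rank-two subsystem. A direct computation of $\sigma^m$ shows it carries the last $\ell-m$ coordinates of $v_k$ to the first $\ell-m$ coordinates of the $k$-th output, and the first $m$ coordinates of $v_{k-1}$, in order, to the last $m$ coordinates of the $k$-th output; hence $\bR_{W_1}=\bR_{W_2}$ is equivalent to $\hat R_m\circ\sigma^m=\sigma^m\circ\hat R_0$, which holds precisely when $R_0=R_m$ as piecewise-linear maps on $\Z_{\ge0}^{\oplus m}$. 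Finally $R_0=R_m$ because the rank-two PBW transition map depends only on the Cartan integers of the subsystem, and $\zeta$, being a Dynkin diagram automorphism, identifies $\langle s_i,s_j\rangle$ with $\langle s_{i^*},s_{j^*}\rangle$ and the braid move $\mathrm{rev}(W_1)\to\mathrm{rev}(W_2)$ with $\mathrm{rev}(W_1)^*\to\mathrm{rev}(W_2)^*$. The main obstacle is exactly this last stretch: organizing the telescoping with the $*$-twisted rotations, verifying the clean decoupling of $\sigma^m$, and pinning down the locality and $\zeta$-invariance of the rank-two transition maps that make $R_0=R_m$.
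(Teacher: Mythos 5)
Your proposal is correct and follows essentially the same route as the paper: embed the rank-two braid word into a reduced expression for $w_0$, iterate Proposition~\ref{Prop: R and ld} to realize $\bR_i$ as a rotate-and-carry shift on Lusztig data, and conclude from the locality of the Berenstein--Zelevinsky transition map together with $a_{i,j}=a_{i^*,j^*}$; the reduction of the $\bRs$-relations to the $\bR$-relations via Lemma~\ref{Lem: R R*} is also how the paper concludes. The only difference is bookkeeping (you make the reversal $\mathrm{rev}(W)$ and the shift $\sigma^m$ explicit, while the paper writes the two runs out coordinatewise), not substance.
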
	
\begin{proof}
Let $\bfm (i,j) := ( \underbrace{ i,j,i,j, \ldots  }_{m(i,j) \ \text{times}} ) \in I^{m(i,j)}$. Then there exists a sequence $ \bfk \in I^{\ell- m(i,j)}$ such that 
$$
 \bfm (i,j) * \bfk \in \rR(w_0), 
$$
where $A*B$ is the concatenation of $A$ and $B$. 
Note that $\bfm (j,i) * \bfk \in \rR(w_0)$.
By \eqref{Eq: rxw rot}, the sequences $\bfk * \bfm (j^*, i^*)$ and $\bfk * \bfm (i^*, j^*)$ are also contained in $\rR(w_0)$. 
We set 
$$
\bfi_1 := \bfm (i,j) * \bfk, \quad \bfi_2 := \bfm (j,i) * \bfk, \quad 
\bfj_1 :=  \bfk * \bfm (i^*,j^*), \quad \bfj_2 :=  \bfk * \bfm (j^*,i^*). 
$$

Let $\cb =(b_k)_{k\in \Z} \in \hBi$, and for any $k\in \Z$ we write 
\begin{align*}
	\DcL_{\bfi_1, 1}(b_k) &= (x_{k, 1}, x_{k, 2}, \ldots, x_{k, \ell-1}, x_{k, \ell}), \\ 
	\DcL_{\bfi_2, 1}(b_k) &= (y_{k, 1}, y_{k, 2}, \ldots, y_{k, \ell-1}, y_{k, \ell}).
\end{align*}
 Let $m = m(i,j)$. By the construction, we have
\begin{align} \label{Eq: ld k}
x_{k,t} = y_{k,t} \qquad \text{ for any $k\in \Z$ and $ t \in \{m+1,m+2, \ldots, \ell  \}$.}
\end{align}
Moreover, since the transition map $T_{i,j}$ between two Lusztig's data was computed in \cite[Section 3]{BZ97} (see also \cite[Section 2.3]{SST18}), 
the first $m$ components of  $( x_{k,1}, x_{k,2}, \ldots, x_{k,\ell}  )$ and $( y_{k,1}, y_{k,2}, \ldots, y_{k,\ell}  )$ satisfy 
\begin{align} \label{Eq: ld braid}
T_{i,j} ( x_{k,1}, x_{k,2}, \ldots, x_{k,m}  ) = (y_{k,1}, y_{k,2}, \ldots, y_{k,m}) \quad \text{ for any $k\in \Z$.}  
\end{align}

We write $\bR_{\bfm(j,i)} ( \bfb) = (b_k')_{k\in \Z}$ and $\bR_{\bfm(i,j)} ( \bfb) = (b_k'')_{k\in \Z}$. By Proposition \ref{Prop: R and ld}, we have
\begin{align*}
	\DcL_{\bfj_1, 1} (b_k') &= ( x_{k, m+1}, x_{k, m+2}, \ldots, x_{k, \ell}, x_{k-1, 1}, \ldots, x_{k-1, m}), \\
	\DcL_{\bfj_2, 1} (b_k'') &= ( y_{k, m+1}, y_{k, m+2}, \ldots, y_{k, \ell}, y_{k-1, 1}, \ldots, y_{k-1, m})
\end{align*}
for any $k\in \Z$.
Since $a_{i,j} = a_{i^*, j^*}$,  the transition map $T_{i,j}$ coincides with $T_{i^*, j^*}$. By \eqref{Eq: ld k} and \eqref{Eq: ld braid},  the transition map $T_{i^*,j^*}$ sends $\DcL_{\bfj_1, 1} (b_k') $ to $\DcL_{\bfj_2, 1} (b_k'') $. We thus conclude that $b_k' = b_k'' $ for any $k\in \Z$, i.e., 
$$ 
\bR_{\bfm(j,i)} ( \bfb) = \bR_{\bfm(i,j)} ( \bfb).
$$ 
The relation for $\bR_i^*$	follows from Lemma \ref{Lem: R R*}.
\end{proof}

Thanks to Theorem \ref{Thm: main1}, the braid group $\bg$ acts on the extended crystal $\hBi$ as follows:
\begin{align*}
r_i \cdot \cb := \bR_i(b) \quad \text{ and }\quad 	r_i^{-1} \cdot \cb := \bR_i^*(b) \qquad \text{ for $i\in I$ and $\cb \in \hBi$.}
\end{align*}	
For any $w = r_{i_1}^{\epsilon_1} r_{i_2}^{\epsilon_2} \cdots r_{i_t}^{\epsilon_t} \in \bg$ with $\epsilon_k \in \{ -1,1 \}$, we define 
\begin{align} \label{Eq: bR_w}
\bR_w :=  R_{1}  R_{2} \cdots  R_{t},
\end{align}
where $R_{k} := 
\begin{cases}
	\bR_{i_k} & \text{ if } \epsilon_k = 1, \\
\bRs_{i_k} & \text{ if } \epsilon_k = -1.
\end{cases}
$

\begin{remark} \label{Rmk: tran}
 By Lemma \ref{Lem: R R*} (iv),  the set $\hBi_0$ of weight $0$ is invariant under the action of $\bg$. 	
 Thus the action of $\bg$ on $\hBi$ is not transitive.
 In the case where $\cmA$ is of type $A_1$, i.e. $\g = \mathfrak{sl}_2$,  it is easy to prove that $\hBi$ is faithful as a $\bg$-set. It is conjectured that $ \hBi$ is faithful as a $\bg$-set for any Cartan matrix $\cmA$.
  
\end{remark}	

Let us recall the involution $\zeta$ defined in \eqref{Eq: i->i*}. Since the involution $\zeta$ gives an involution on $B(\infty)$, we have the induced involution on the extended crystal $\hBi$,
i.e., 
$$
\zeta (\bfb) := ( \zeta(b_k))_{k\in \Z} \qquad \text{ for any $\bfb  = (b_k)_{k\in \Z} \in \hBi$.}
$$

\begin{theorem} \
\bni
\item For any $i\in I$, we have $ \bR_i \circ \zeta = \zeta \circ \bR_{\zeta(i)}   $ and $ \bRs_i \circ \zeta = \zeta \circ \bRs_{\zeta(i)} $.		
\item For any $\bfi \in \rR(w_0)$, we have 
$$
\bR_\bfi = \cd \circ \zeta \qquad \text{ and } \qquad \bRs_\bfi = \cd^{-1} \circ \zeta.
$$ 
In particular, unless the Cartan matrix $\cmA$ is of type $A_n$ ($n\in \Z_{>1}$), $D_{n}$ ($n$ is odd) or $E_6$, we have
$$
\bR_\bfi = \cd   \qquad \text{ and } \qquad \bRs_\bfi = \cd^{-1}.
$$ 
\ee
\end{theorem}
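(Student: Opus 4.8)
The plan is to reduce both parts to the equivariance of the diagram involution $\zeta$ with the crystal structure on $B(\infty)$, and then, for Part (ii), to iterate Proposition \ref{Prop: R and ld}(i) along a chain of ``rotated'' reduced words so that a full pass reassembles the accumulated one-step shifts into the global shift $\cd$.

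First I would record that, since $\zeta$ is induced by the Dynkin diagram automorphism $i\mapsto i^*$, it commutes with all crystal data up to relabeling: $\tf_i\circ\zeta=\zeta\circ\tf_{\zeta(i)}$, $\te_i\circ\zeta=\zeta\circ\te_{\zeta(i)}$ and the same for $\tfs_i,\tes_i$, together with $\ep_i\circ\zeta=\ep_{\zeta(i)}$, $\eps_i\circ\zeta=\eps_{\zeta(i)}$ and $\langle h_i,\wt(\zeta b)\rangle=\langle h_{\zeta(i)},\wt(b)\rangle$. Feeding these into the definitions of $\cT_i$, $\ipi$, $\pii$ and hence of $\tcT_i,\tcTs_i$ gives $\tcT_i\circ\zeta=\zeta\circ\tcT_{\zeta(i)}$ and its starred analogue. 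Part (i) is then immediate: applying these identities componentwise to $b_k'=\tfs_i^{\,\ep_i(b_{k-1})}(\tcT_i(b_k))$ yields $\bR_i\circ\zeta=\zeta\circ\bR_{\zeta(i)}$, and $\bRs_i\circ\zeta=\zeta\circ\bRs_{\zeta(i)}$ identically. The same equivariance, inserted into Definition \ref{Def: LD}, gives the single fact about $\zeta$ that Part (ii) needs, namely $\DcL_{\bfi,1}(\zeta b)=\DcL_{\zeta(\bfi),1}(b)$, where $\zeta(\bfi):=(i_1^*,\ldots,i_\ell^*)$; equivalently $\DcL_{\bfi,1}(c)=\DcL_{\zeta(\bfi),1}(\zeta c)$.

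For Part (ii), fix $\bfi=(i_1,\ldots,i_\ell)\in\rR(w_0)$ and set $\bfi^{(t)}:=(i_{t+1},\ldots,i_\ell,i_1^*,\ldots,i_t^*)$, which lies in $\rR(w_0)$ by \eqref{Eq: rxw rot}, with $\bfi^{(0)}=\bfi$ and $\bfi^{(\ell)}=\zeta(\bfi)$. Since $\bfi^{(t)}$ begins with $i_{t+1}$, Proposition \ref{Prop: R and ld}(i) describes $\bR_{i_{t+1}}$ on $\DcL_{\bfi^{(t)},1}$-data as the cyclic shift dropping the first entry and re-appending the first entry of the neighbouring strand $k-1$. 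Writing $\DcL_{\bfi,1}(b_k)=(n_{k,1},\ldots,n_{k,\ell})$, I would prove by induction on $t$ that after applying $\bR_{i_1},\ldots,\bR_{i_t}$ in this order the $k$-th slot has Lusztig datum
$$
\DcL_{\bfi^{(t)},1}=(n_{k,t+1},\ldots,n_{k,\ell},\,n_{k-1,1},\ldots,n_{k-1,t}).
$$
At $t=\ell$ this reads $\DcL_{\zeta(\bfi),1}=(n_{k-1,1},\ldots,n_{k-1,\ell})=\DcL_{\bfi,1}(b_{k-1})=\DcL_{\zeta(\bfi),1}(\zeta(b_{k-1}))$, the last equality by the equivariance of $\DcL$. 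Bijectivity of $\DcL_{\zeta(\bfi),1}$ then forces the $k$-th slot to be $\zeta(b_{k-1})$, so that $\bR_{i_\ell}\cdots\bR_{i_1}=\cd\circ\zeta$.

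Finally I would pass to the stated formula. The composition just computed is $\bR_{\bfi^{\mathrm{rev}}}$ for $\bfi^{\mathrm{rev}}=(i_\ell,\ldots,i_1)$; since $w_0$ is an involution, reversal is a bijection of $\rR(w_0)$ onto itself, so $\bR_\bfj=\cd\circ\zeta$ for every $\bfj\in\rR(w_0)$ (in particular $\bR_\bfj$ is independent of $\bfj$). Taking inverses via Lemma \ref{Lem: R R*}(ii) and using that $\zeta$ (slot-wise) commutes with $\cd$ (shift) gives $\bRs_\bfj=(\cd\circ\zeta)^{-1}=\cd^{-1}\circ\zeta$. The ``in particular'' clause follows from the explicit description after \eqref{Eq: i->i*}: outside types $A_n$ ($n>1$), $D_n$ ($n$ odd) and $E_6$ one has $\zeta=\id$, whence $\bR_\bfi=\cd$ and $\bRs_\bfi=\cd^{-1}$. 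The main obstacle is the inductive bookkeeping in the displayed formula: one must check that the ``$k-1$'' shifts contributed at each step by Proposition \ref{Prop: R and ld} accumulate, after a complete pass through the word, into exactly the single global shift $\cd$ rather than a more intricate permutation of the strands.
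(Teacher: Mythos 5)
Your proof is correct and follows essentially the same route as the paper: part (i) from the $\zeta$-equivariance of the crystal operators fed into the definitions of $\bR_i$ and $\bRs_i$, and part (ii) by iterating Proposition \ref{Prop: R and ld}(i) along the rotated reduced words and invoking the $\zeta$-equivariance of Lusztig data, exactly as the paper does via \eqref{Eq: j* ld}. The only cosmetic difference is that the paper tracks Lusztig data with respect to the reversed word $(i_\ell,\ldots,i_1)$ so as to compute $\bR_\bfi$ directly, whereas you track data with respect to $\bfi$ itself, obtain $\bR_{\bfi^{\mathrm{rev}}}=\cd\circ\zeta$, and then transfer the conclusion to all of $\rR(w_0)$ using that reversal permutes $\rR(w_0)$.
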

\begin{proof}
	
(i)	 Since  
\begin{align} \label{Eq: tf zeta}
	\tf_i \left(\zeta (b) \right) = \zeta \left(  \tf_{\zeta(i)}  (b) \right)  \quad \text{ and } \quad \te_i \left( \zeta (b) \right) = \zeta \left(  \te_{\zeta(i)}  (b) \right)
\end{align}	
 for any $i\in I$ and  $b \in B(\infty)$, we have 
\begin{align} \label{Eq: T zeta}
	\cT_i \left(\zeta (b') \right) = \zeta \left(  \cT_{\zeta(i)}  (b')  \right) \quad \text{ and } \quad \cTs_i \left(\zeta (b'') \right) = \zeta \left(   \cTs_{\zeta(i)}  (b'') \right)
\end{align}	
for any $b' \in {_{\zeta(i)}B(\infty)}$ and $b'' \in B(\infty)_{\zeta(i)}$.
Thus (i) follows from the definitions of $\bR_i$ and $\bRs_i$.

(ii) 
Let $\bfi = (i_1, i_2, \ldots, i_\ell) \in \rR(w_0)$ and set 
$$
 \bfj := (i_{\ell}, i_{\ell-1}, \ldots, i_1  ), \qquad  \bfj^* := (i_{\ell}^*, i_{\ell-1}^*, \ldots, i_1^*).
$$ 
By \eqref{Eq: tf zeta} and \eqref{Eq: T zeta}, we have 
$$
f_{\bfj^*, \epsilon} (\beta_k^*) =  \zeta( f_{\bfj, \epsilon} (\beta_k)),
$$
where $\beta_k^* := s_{i_\ell^*} \cdots s_{i_{\ell-k+2}^*}(\alpha_{i^*_{\ell-k+1}})$ and $\beta_k := s_{i_\ell} \cdots s_{i_{\ell - k+2}}(\alpha_{i_{\ell-k+1}})$.
Thus, for any $ b\in B(\infty)$, if we write $  \DcL_{\bfj, \epsilon}(b) = (n_1, n_2, \ldots, n_{\ell-1}, n_{ \ell}) $, then we have  
\begin{align} \label{Eq: j* ld}
\DcL_{\bfj^*, \epsilon}( \zeta(b)) = (n_1, n_2, \ldots, n_{\ell-1}, n_{ \ell}). 
\end{align}

Let $\bfb = (b_k)_{ k\in \Z} \in \hBi $, and write
$ \DcL_{\bfj, 1}(b_k) = (n_{k, 1}, n_{k, 2}, \ldots, n_{k, \ell-1}, n_{k, \ell})
$
for $k\in \Z$. Let $ \bR_{\bfi} (\bfb) = (b_k')_{k\in \Z}$. By Proposition \ref{Prop: R and ld} and \eqref{Eq: j* ld}, we have 
\begin{align*}
\DcL_{\bfj^*, 1}(b_k')= (n_{k-1, 1}, n_{k-1, 2}, \ldots, n_{k-1, \ell-1}, n_{k-1, \ell}) = \DcL_{\bfj^*, 1}( \zeta ( b_{k-1})),
\end{align*}	
which implies that $\bR_\bfi = \cd \circ \zeta $.  In the same manner, one can prove $ \bRs_\bfi = \cd^{-1} \circ \zeta$. 

The last identifies follow from the fact that $\zeta$ is the identity unless $\cmA$ is of type $A_n$ ($n\in \Z_{>1}$), $D_{n}$ ($n$ is odd) or $E_6$.
\end{proof}	

\begin{example} \label{Ex: braid}
We keep the notations given in Example \ref{Ex: multisegment} and Example \ref{Ex: multi_EC}.	

Let $n=2$. In this case, $\rR(w_0) = \{ (1,2,1), (2,1,2) \}$ and the segments of $\MS_2$ are $[1]$, $[12]$, and $[2]$.
\bni
\item 
Let $\hbfm = (  \ldots, \emptyset, \emptyset, \underline{ [2]}, \emptyset, \emptyset, \ldots ) \in \hMS_2$, where the underlined element is located at the 0-position.
Note that $\tF_{2,0} (\one)= \hbfm$.
By direct computations, we have
\begin{align*}
\bR_1(\hbfm) &= (  \ldots, \emptyset, \emptyset, \underline{ [12]}, \emptyset, \emptyset, \ldots ), \\
\bR_2\bR_1(\hbfm) &= (  \ldots, \emptyset, \emptyset, \underline{ [1]}, \emptyset, \emptyset, \ldots ), \\
\bR_1\bR_2\bR_1(\hbfm) &= (  \ldots, \emptyset, [1], \underline{ \emptyset }, \emptyset, \emptyset, \ldots ),
\end{align*}
and 
\begin{align*}
	\bR_2(\hbfm) &= (  \ldots, \emptyset, [2], \underline{ \emptyset }, \emptyset, \emptyset, \ldots ), \\
	\bR_1\bR_2(\hbfm) &= (  \ldots, \emptyset, [12], \underline{ \emptyset }, \emptyset, \emptyset, \ldots ), \\
	\bR_2\bR_1\bR_2(\hbfm) &= (  \ldots, \emptyset, [1], \underline{ \emptyset }, \emptyset, \emptyset, \ldots ).
\end{align*}
Thus we have 
$$
\bR_1\bR_2\bR_1(\hbfm) = \bR_2\bR_1\bR_2(\hbfm) = \cd \circ \zeta (\hbfm).
$$

\item 
Let $\hbfm = (  \ldots, \emptyset, \emptyset, \bfm_1, \underline{ \bfm_0 }, \bfm_{-1}, \emptyset, \emptyset, \ldots ) \in \hMS_2$, where 
$\bfm_1 = 2  [2] + 3[12] + 4[1]$, $\bfm_0 =  [2] + [12] + 2[1]$  and $\bfm_{-1} = 3  [2] + 2[12] + [1]$.
Then we have 
\begin{align*}
\ep_1 (\bfm_1) = 5, \qquad 
\ep_1 (\bfm_0) = 2, \qquad 
\ep_1 (\bfm_{-1}) = 2,
\end{align*}
and 
\begin{align*}
\tem_1 (\bfm_1) = 5[2] + 2[1], \quad 
\tem_1 (\bfm_0) = 2[2]+[1], \quad 
\tem_1 (\bfm_{-1}) = 5[2]+[1]. 
\end{align*}
Thus, we have 
\begin{align*}
 \tcT_1(\bfm_1) = 2[2] + 3[12] , \quad 
\tcT_1 (\bfm_0) = [2]+[12], \quad 
\tcT_1 (\bfm_{-1}) = [2]+4[12], 
\end{align*}
which implies that 
$\bR_1 (\hbfm) = (  \ldots, \emptyset, \bfm_2', \bfm_1', \underline{ \bfm_0' }, \bfm_{-1}', \emptyset, \emptyset, \ldots )$, where 
\begin{align*}
\bfm_2' &= 5 [1], \\ 
\bfm_1' &= 2[2] + 3[12] + 2 [1], \\
\bfm_0' &= [2]+[12] + 2[1], \\
\bfm_{-1}' &= [2]+4[12].
\end{align*}

\ee

\end{example}

\vskip 2em

\section{Similarity} \label{Sec: similarity}
In this section, we apply the result of \cite{K96} to the extended crystals. 

Let $J$ be a finite set and let $\vt: I \twoheadrightarrow J$ be a surjective map. We take positive integers $m_i$ for each $i\in I$, and set 
$$ 
\tal_j := \sum_{i \in \vt^{-1}(j)} m_i \al_i,
$$ 
where $\vt^{-1}(j) = \{ i\in I \mid \vt(i)=j \}$.
We define $ \twlP$ to be the subset of $\wlP$ consisting of $\la \in \wlP$ such that, for any $j\in J$, the value $ \frac{\langle h_i, \la \rangle}{m_i}$ is an integer and does not depend on the choice of $i \in\vt^{-1}(j)$. 
For $i\in I$, we denote by $ \tch_i \in \twlP^\vee := \Hom_\Z(\twlP, \Z)$ the element satisfying $ \langle \tch_j, \la \rangle =  \frac{\langle h_i, \la \rangle}{m_i}$ for $i\in \vartheta^{-1}(j)$ and $\la \in \twlP$.
We assume that 
\bna
\item $\langle h_i, \alpha_{i'} \rangle =0$ for $i,i'\in I$ such that $ \vartheta(i) = \vartheta(i')$ and $i\ne i'$,
\item $\tal_j$ is contained in $ \twlP$ for any $j\in J$.
\ee

We define the matrix $\cmA_\vt = (c_{i,j})_{i,j\in J}$ by 
$$
c_{i,j }:= \langle \tch_i, \tal_j \rangle\qquad \text{for any } i,j \in J.
$$
Note that $ \cmA_\vartheta$ is a generalized Cartan matrix. Let $B(\infty)$ and $B_\vt(\infty)$ be the crystals of the negative halfs of the quantum groups associated with $\cmA$ and $\cmA_\vt$ respectively.
By \cite[Section 5]{K96} (see \cite[Proposition 3.2]{NaSa03} and \cite[Theorem 2.3.1]{NaSa05} for the $*$-operator), we have the following theorem.

\begin{theorem} [{\cite[Section 5]{K96}}] 
There exists a unique injective map $\nP_\vt: B_\vt(\infty) \longrightarrow B(\infty)$ such that $ \nP_\vt(\hv) = \hv $ and, for any $j\in J$,   
\begin{align*}
	\nP_\vt( \te_j(b)) &=  \left(\prod_{i \in \vt^{-1}(j)  } \te_i^{m_i} \right) \nP_\vt(b), \qquad 
	\nP_\vt( \tf_j(b)) =  \left(\prod_{i \in \vt^{-1}(j)  } \tf_i^{m_i} \right) \nP_\vt(b), \\
	\nP_\vt( \tes_j(b)) &=  \left(\prod_{i \in \vt^{-1}(j)  } \tess{m_i}_i \right) \nP_\vt(b), \qquad 
	\nP_\vt( \tfs_j(b)) =  \left(\prod_{i \in \vt^{-1}(j)  } \tfss{m_i}_i \right) \nP_\vt(b).
\end{align*}
\end{theorem}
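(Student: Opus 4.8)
The plan is to separate uniqueness from existence. Uniqueness is immediate: the crystal $B_\vt(\infty)$ is generated from its highest weight vector $\hv$ by the operators $\tf_j$ ($j\in J$), so any map with $\nP_\vt(\hv)=\hv$ satisfying the prescribed $\tf_j$-relation is forced on every element $\tf_{j_1}\cdots\tf_{j_l}\hv$; two such maps therefore coincide. The substance is existence, namely producing a single map whose value is independent of the chosen expression and which intertwines all four families of operators at once. The orthogonality hypothesis (a), that $\langle h_i,\al_{i'}\rangle=0$ whenever $\vt(i)=\vt(i')$ and $i\ne i'$, is what makes this possible: it forces the reflections $\{s_i\}_{i\in\vt^{-1}(j)}$ to commute and the operators $\tf_i$ in a common fibre to commute, so that $\prod_{i\in\vt^{-1}(j)}\tf_i^{m_i}$ behaves like a single ``folded'' raising operator. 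This structural fact is what the whole argument exploits.

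Working in finite type, which is the standing assumption of Sections \ref{Sec: crystals and PBW} and \ref{Sec: braid group actions}, I would realize both crystals through compatible PBW parametrizations. Fix a reduced word $\bfi_\vt\in\rR(w_0^\vt)$ for the longest element of $\weyl_{\cmA_\vt}$, and replace each letter $j$ by a fixed ordering of its fibre $\vt^{-1}(j)$; because the fibre reflections commute, this yields a reduced word $\bfi\in\rR(w_0)$ for $\weyl_\cmA$. I would then define $\nP_\vt$ on Lusztig data: the folded datum $(a_1,\ldots,a_r)$ of $b$ with respect to $\bfi_\vt$ is sent to the datum in $\Z_{\ge0}^{\oplus\ell}$ obtained by placing $m_i a_s$ in each original position coming from $i\in\vt^{-1}(j_s)$. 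Injectivity is then transparent, since the structured image datum recovers $(a_1,\ldots,a_r)$, and the normalization $\nP_\vt(\hv)=\hv$ corresponds to the zero datum.

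The core work is to confirm that this combinatorially defined map intertwines $\te_j,\tf_j$ and $\tes_j,\tfs_j$ with the advertised products. For this I would use the change-of-Lusztig-datum formulas of Lemma \ref{Lem: T_i and ld}, which record how applying $\tf$ (respectively a Saito reflection) shifts a Lusztig datum, together with \eqref{Eq: Gup PBW} to pass between crystal operators and dual PBW vectors; one checks that adding one box on the folded side matches adding $m_i$ boxes simultaneously in the fibre positions on the original side, the orthogonality of the fibre roots guaranteeing that the order within a fibre is irrelevant. The main obstacle will be the simultaneous compatibility with the starred operators $\tes_j,\tfs_j$: controlling the $*$-involution forces use of the two-parameter PBW description (both $\epsilon=\pm1$) and the identity $\DcL_{\bfi,-1}=\DcL_{\bfi^\vee,1}$ of Lemma \ref{Lem: two ld}, and this is exactly where the $*$-operator analysis of \cite{NaSa03, NaSa05} enters, together with the verification that the chosen lift $\bfi$ of $\bfi_\vt$ may be taken to serve both conventions. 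Once intertwining holds for $\tf_j$ and $\tes_j$, the remaining two relations follow from the inverse relation between $\te_j$ and $\tf_j$ and from $*$-symmetry, and independence of $\nP_\vt$ from $\bfi_\vt$ and from the internal fibre ordering follows from the uniqueness argument of the first paragraph. For a general symmetrizable $\cmA$, where PBW bases are unavailable, the same conclusion is instead reached by Kashiwara's embedding of $B_\vt(\infty)$ into iterated tensor products of elementary crystals together with the characterization of $B(\infty)$.
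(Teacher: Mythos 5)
The paper does not actually prove this statement: it imports it wholesale from \cite[Section 5]{K96}, with the compatibility for the $*$-operators $\tes_j,\tfs_j$ taken from \cite[Proposition 3.2]{NaSa03} and \cite[Theorem 2.3.1]{NaSa05}. So your attempt to give a self-contained argument via Lusztig data is a genuinely different route from anything in the paper, and in finite type it is a reasonable one; your uniqueness paragraph is correct as stated. However, the existence half has a real gap.

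The gap is that the only intertwining relation your construction gives you nearly for free is the one for the letter $j_1$ that begins the chosen reduced word $\bfi_\vt$: for a word beginning with the fibre $\vt^{-1}(j_1)$ (whose members commute), $\tf_{j_1}$ and the product $\prod_{i\in\vt^{-1}(j_1)}\tf_i^{m_i}$ both act by incrementing the leading block of coordinates. To get the relation for every $j\in J$ you must move $j$ to the front, i.e.\ change the reduced word, and hence you must show that your duplication map $(a_1,\dots,a_r)\mapsto(m_i a_s)_{i\in\vt^{-1}(j_s)}$ commutes with the transition maps between Lusztig parametrizations. Your stated justification --- that independence of the reduced word ``follows from the uniqueness argument of the first paragraph'' --- is circular: uniqueness applies only to maps already known to satisfy all four relations for all $j$, which is exactly what is at stake. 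What actually has to be verified is a rank-$2$ computation: for each braid relation in $\weyl_{\cmA_\vt}$, the corresponding composite of braid moves in $\weyl_{\cmA}$ (for instance, the composite of $A_2$- and $A_1\times A_1$-moves inside an $A_3$-subsystem in the $A_{2n-1}\to B_n$ folding, or the longer composite in $A_5\to G_2$) must carry duplicated data to duplicated data via the folded transition map of \cite[Section 3]{BZ97}. Nothing in Lemma \ref{Lem: T_i and ld}, Lemma \ref{Lem: two ld} or \eqref{Eq: Gup PBW} supplies this; it is the actual content of the theorem in this approach and is nowhere carried out.

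Two smaller points. First, deducing the $\te_j$-relation from the $\tf_j$-relation requires handling the case $\te_j(b)=0$, i.e.\ you need $\ep_i(\nP_\vt(b))=m_i\,\ep_j(b)$ for $i\in\vt^{-1}(j)$ (and the $*$-analogue), which is an additional assertion you never establish. Second, the theorem as stated is not restricted to finite type, and your closing sentence for general symmetrizable $\cmA$ simply points back to Kashiwara's embedding into elementary crystals --- that is, to the proof in \cite{K96} that the paper is already citing --- so it does not add an independent argument.
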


The above theorem can be easily extended to the extended crystals.

\begin{corollary} \label{Cor: folding for hBi}
There exists a unique injective map $\hP_\vt: \hBvi \longrightarrow \hBi$ such that  $ \hP_\vt(\one) = \one $ and, for any $(j,k) \in \cJ$,    
\begin{align*} 
	\hP_\vt( \tE_{j,k}(\cb)) &=  \left(\prod_{i \in \vt^{-1}(j)  } \tE_{i,k}^{m_i} \right) \hP_\vt(\cb), \qquad 
	\hP_\vt( \tF_{j,k}(\cb)) =  \left(\prod_{i \in \vt^{-1}(j)  } \tF_{i,k}^{m_i} \right) \hP_\vt(\cb).
\end{align*}
\end{corollary}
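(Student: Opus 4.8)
The plan is to define $\hP_\vt$ componentwise from the map $\nP_\vt$ of the preceding theorem and to reduce the two intertwining identities to the componentwise similarity relations together with a sign-matching argument. First I would set $\hP_\vt(\cb) \seteq (\nP_\vt(b_k))_{k\in\Z}$ for $\cb = (b_k)_{k\in\Z} \in \hBvi$. Since $\nP_\vt(\hv) = \hv$ and $b_k = \hv$ for all but finitely many $k$, the tuple $(\nP_\vt(b_k))_k$ again has $\hv$ in all but finitely many slots, so $\hP_\vt(\cb) \in \hBi$; injectivity of $\hP_\vt$ is immediate from that of $\nP_\vt$, and $\hP_\vt(\one) = \one$ is clear.

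The heart of the proof is the two displayed relations. Here I would use the consequences $\ep_i(\nP_\vt(b)) = m_i\,\ep_j(b)$ and $\eps_i(\nP_\vt(b)) = m_i\,\eps_j(b)$ of the theorem, valid for $i \in \vt^{-1}(j)$. Writing $\cb' = \hP_\vt(\cb) = (b_k')_k$ with $b_k' = \nP_\vt(b_k)$, these give $\hep_{i,k}(\cb') = \ep_i(b_k') - \eps_i(b_{k+1}') = m_i\bigl(\ep_j(b_k) - \eps_j(b_{k+1})\bigr) = m_i\,\hep_{j,k}(\cb)$ for every $i \in \vt^{-1}(j)$; hence the sign of $\hep_{i,k}(\cb')$ agrees with that of $\hep_{j,k}(\cb)$ for all such $i$ simultaneously. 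In the case $\hep_{j,k}(\cb)\ge0$ the operator $\tF_{j,k}$ acts on the $k$-th slot by $\tf_j$, while each $\tF_{i,k}$ acts on the $k$-th slot by $\tf_i$; applying the componentwise relation $\nP_\vt(\tf_j(b_k)) = \bigl(\prod_{i\in\vt^{-1}(j)} \tf_i^{m_i}\bigr)\nP_\vt(b_k)$ then identifies the two sides. In the case $\hep_{j,k}(\cb)<0$ one argues identically with the $(k+1)$-th slot, $\tes_j$, and the relation $\nP_\vt(\tes_j(b_{k+1})) = \bigl(\prod_{i} \tess{m_i}_i\bigr)\nP_\vt(b_{k+1})$.

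The delicate point, and what I expect to be the main obstacle, is to check that the product $\prod_{i\in\vt^{-1}(j)}\tF_{i,k}^{m_i}$ stays in a single ``mode'' throughout all of its applications, i.e.\ that each of the $\sum_i m_i$ elementary steps acts on the same slot as $\tF_{j,k}$ does, rather than flipping between slots $k$ and $k+1$. For this I would use that, inside an $i$-string, $\ep_i(\tf_i b) = \ep_i(b) + 1$ (resp.\ $\eps_i(\tes_i b) = \eps_i(b)+1$), so that applying $\tf_i$ to the $k$-th slot only increases $\hep_{i,k}$ while the $(k+1)$-th slot is untouched; and that for $i \ne i'$ in the same fiber the orthogonality hypothesis $\langle h_i,\alpha_{i'}\rangle = 0$ makes the operators $\tf_i, \tf_{i'}$ commute and preserve each other's $\ep$-values, so that the steps for distinct $i$ do not interfere and the product is independent of the chosen order. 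Together these keep $\hep_{i,k}\ge0$ (resp.\ $<0$) at every intermediate stage, which is exactly what is needed to run the identification above.

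Finally, the $\tE_{j,k}$ relation follows formally from the $\tF_{j,k}$ relation by composing with the inverse operators: since $\tF_{i,k}$ and $\tE_{i,k}$ are mutually inverse and the operators for distinct $i$ commute, $\prod_i \tE_{i,k}^{m_i}$ inverts $\prod_i \tF_{i,k}^{m_i}$, and composing the $\tF$-identity with $\tE_{j,k}$ on the right and $\prod_i \tE_{i,k}^{m_i}$ on the left yields the $\tE$-identity. Uniqueness of $\hP_\vt$ is then immediate from the connectedness of $\hBvi$ as a $\cJ$-colored graph (Proposition \ref{Lem: connectedness}): every element is obtained from $\one$ by a sequence of operators $\tF_{j,k}$, and the prescribed value $\hP_\vt(\one) = \one$ together with the intertwining relations determines $\hP_\vt$ on all of $\hBvi$.
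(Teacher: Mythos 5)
The paper offers no proof of this corollary at all (it is dismissed with ``the above theorem can be easily extended''), so there is no argument to compare against line by line; your componentwise construction $\hP_\vt(\cb) \seteq (\nP_\vt(b_k))_{k\in\Z}$, the mode analysis via $\hep_{i,k}(\hP_\vt(\cb)) = m_i\,\hep_{j,k}(\cb)$, the reduction of the $\tE$-relations to the $\tF$-relations by inversion, and uniqueness via connectedness together constitute exactly the argument being implicitly invoked, and the structure is correct. You are also right to isolate the preservation of the ``mode'' through all $\sum_i m_i$ elementary steps as the only delicate point.

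One local slip there: by definition $\eps_i(b) = \max\{k \ge 0 \mid \te_i^{*k}(b) \ne 0\}$, so $\eps_i(\tes_i b) = \eps_i(b) - 1$, not $+1$. Hence in the case $\hep_{j,k}(\cb) < 0$, each application of $\tes_i$ to the $(k{+}1)$-st slot \emph{increases} $\hep_{i,k} = \ep_i(b_k) - \eps_i(b_{k+1})$ by $1$, so your stated reason (that the quantity only moves further from zero) does not apply; the correct justification is quantitative and is already in your hands: $\hep_{i,k}(\hP_\vt(\cb)) = m_i\,\hep_{j,k}(\cb) \le -m_i$, the applications of $\tes_{i'}$ for $i' \ne i$ in the same fiber leave $\ep_i$ and $\eps_i$ unchanged by the orthogonality hypothesis, and after $t \le m_i - 1$ applications of $\tes_i$ the value is $m_i\hep_{j,k}(\cb) + t \le -1 < 0$, so every one of the $m_i$ steps is still performed in the $\tes_i$-mode on slot $k+1$. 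With that correction (and the standard fact from the similarity theorem that $\ep_i(\nP_\vt(b)) = m_i\ep_{\vt(i)}(b)$ and $\eps_i(\nP_\vt(b)) = m_i\eps_{\vt(i)}(b)$, which you correctly cite) the proof is complete.
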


An automorphism $\sigma$ of a Dynkin diagram gives a desired example for $\vt$ (see \cite[Section 5]{K96} and  \cite[Section 14.4]{Lu93}). 
We illustrate such examples as follows. We take $m_i = 1$ for any $i\in I$. 
\begin{equation} \label{Eq: Folding1}
\begin{aligned} 
	\small
& \xymatrix@R=0.6em@C=0.9em{
 &\circ \ar@{-}[r] \ar@{.}[dd] & \cdots \ar@{-}[r]  & \circ \ar@{-}[r] \ar@{.}[dd] & \circ \ar@{-}[dr]  \ar@{.}[dd] \\
 A_{2n-1} : & &&&& \circ  \ar@{.>}[ddd] \\
&  \circ \ar@{-}[r] \ar@{.>}[dd] & \cdots \ar@{-}[r] & \circ \ar@{-}[r] \ar@{.>}[dd] & \circ \ar@{-}[ru] \ar@{.>}[dd]  \\
&    &&&& \\
B_{n} :&  \circ \ar@{-}[r] & \cdots \ar@{-}[r] & \circ \ar@{-}[r] & \circ \ar@{=>}[r] & \circ 
}
\quad  \qquad
\xymatrix@R=0.6em@C=0.9em{
	&  \circ \ar@{-}[r]  \ar@{.}[dd]  & \circ  \ar@{-}[rd] \ar@{.}[dd] &&   \\
	E_6 : &   && \circ \ar@{-}[r] \ar@{.>}[ddd] & \circ \ar@{.>}[ddd]  \\
	& \circ \ar@{-}[r] \ar@{.>}[dd] & \circ \ar@{-}[ur] \ar@{.>}[dd] &&    \\
	&    &&&& \\
	F_4 :&  \circ \ar@{-}[r] & \circ \ar@{=>}[r] & \circ \ar@{-}[r] & \circ  
}
\\
& \xymatrix@R=0.6em@C=0.9em{
	& &&&& \circ  \ar@{.}[dd] \\
	D_{n+1} : & \circ \ar@{-}[r] \ar@{.>}[ddd] & \cdots \ar@{-}[r] & \circ  \ar@{-}[r] \ar@{.>}[ddd] & \circ  \ar@{-}[ru] \ar@{-}[rd]  \ar@{.>}[ddd] &  \\
	& &&&& \circ  \ar@{.>}[dd]   \\
	&    &&&& \\
	C_{n} :&  \circ \ar@{-}[r] & \cdots \ar@{-}[r] & \circ \ar@{-}[r] & \circ \ar@{<=}[r] & \circ  
}
\qquad \qquad 
\xymatrix@R=0.6em@C=0.9em{
	 &\circ  \ar@{-}[rd]  \ar@{.}[d]   \\
	D_{4} : &  \circ \ar@{-}[r] \ar@{.}[d] & \circ \ar@{.>}[ddd]  \\
	 & \circ  \ar@{-}[ru] \ar@{.>}[dd]    \\
	&    &&&& \\
	G_{2} :&  \circ    \ar@3{->}[r] & \circ 
}
\end{aligned}	 	
\end{equation}
In this case, we simply write $ \hBsi$, $\hP_\sigma$, and $\cmA_\sigma$ for $ \hBvi$, $\hP_\vt$, and $\cmA_\vt$ respectively, and  set $ \orb_\sigma(j) := \theta^{-1}(j)$ for any $j\in J$.
Note that $\zeta$ defined in \eqref{Eq: i->i*} coincides with $\sigma$ if $\zeta$ is not the identity.

There are another examples for $\vt$ in \cite{K96}. 
\begin{align*}
	\small
& \xymatrix@R=0.6em@C=0.9em{
	&\circ \ar@{-}[r] \ar@{.}[dd] & \cdots \ar@{-}[r]  & \circ \ar@{-}[r] \ar@{.}[dd] & \circ \ar@{-}[dr]  \ar@{.}[dd] \\
	A_{2n-1} : & &&&& \bullet  \ar@{.>}[ddd] \\
	&  \circ \ar@{-}[r] \ar@{.>}[dd] & \cdots \ar@{-}[r] & \circ \ar@{-}[r] \ar@{.>}[dd] & \circ \ar@{-}[ru] \ar@{.>}[dd]  \\
	&    &&&& \\
	C_{n} :&  \circ \ar@{-}[r] & \cdots \ar@{-}[r] & \circ \ar@{-}[r] & \circ \ar@{<=}[r] & \circ 
}
\quad  \qquad
\xymatrix@R=0.6em@C=0.9em{
	&\circ  \ar@{-}[r]  \ar@{.}[d] & \circ \ar@{-}[ld] \ar@{.}[d]    \\
	A_{5} : &  \bullet \ar@{-}[r] \ar@{.}[d] & \circ \ar@{.>}[ddd]  \\
	& \circ  \ar@{-}[ru] \ar@{.>}[dd]    \\
	&    &&&& \\
	G_{2} :&  \circ    \ar@3{->}[r] & \circ 
}
\end{align*}	
Here, for $i\in I$, we set 
$m_i :=  
\begin{cases}
	1 & \text{ if the vertex at $i$ is $\circ$},\\
	2 & \text{ if the vertex at $i$ is $\bullet$}.
\end{cases}
$

\smallskip
Let $\sigma$ be an automorphism given in  \eqref{Eq: Folding1}.
Since  $\sigma$ acts on $B(\infty)$, 
we obtain the induced automorphism of $ \hBi$, i.e., 
$$
\sigma(\cb) := (\sigma(b_k))_{k\in \Z} \qquad \text{ for any } \cb = (b_k)_{k\in \Z} \in \hBi.
$$
Let $\bg = \langle r_i \mid i\in I \rangle$ be the Braid group associated with $\cmA$ and let $ \bg_\sigma =  \langle r_j' \mid j \in J \rangle$   be the Braid group associated with $\cmA_\sigma$. 
Then $\bg_\sigma$ is embedded in $\bg$ via the injection 
$$
 \iota_\sigma: \bg_\sigma \rightarrowtail \bg  
$$
 defined by $\iota_\sigma( r_j') = \prod_{ i\in \orb_\sigma(j)} r_i$ for $j\in J$ (see \cite{Crisp96}).

One can prove the following proposition by using Lusztig's result (\cite[Section 14.4]{Lu93}) and Corollary \ref{Cor: folding for hBi}.
\begin{prop} \label{Prop: folding}
Let $\sigma$ be an automorphism given in \eqref{Eq: Folding1}.  
\bni
\item Let $ \hBi^\sigma$ be the set of fixed points of $\hBi$ under the action $\sigma$. For any $(j,k)\in \cJ$, we define 
$$
\fE_{j,k}^\sigma := \prod_{i \in \orb_\sigma(j)  } \tE_{i,k} \quad \text{ and }\quad 
\fF_{j,k}^\sigma := \prod_{i \in \orb_\sigma(j) } \tF_{i,k}.
$$ 
 Then the map $\hP_\sigma $ given in Corollary \ref{Cor: folding for hBi} is compatible with the extended crystal operators, i.e., 
 $$
 \hP_\sigma \left( \tE_{j,k} (\bfb) \right) = \fE_{j,k}^\sigma \left( \hP_\sigma (  \bfb) \right) \quad \text{ and } \quad   \hP_\sigma \left( \tF_{j,k} (\bfb) \right) = \fF_{j,k}^\sigma \left( \hP_\sigma (  \bfb) \right)
 $$ 
 for any $(j,k)\in \cJ$ and $ \bfb \in \hBsi$, 
 which provides an isomorphism of extended crystals 
$$
\hP_\sigma :  \hBsi \buildrel \sim \over \longrightarrow \hBi^\sigma.
$$

\item Let $\bg = \langle r_i \mid i\in I \rangle$ be the Braid group associated with $\cmA$ and let $ \bg_\sigma =  \langle r_j' \mid j \in J \rangle$   be the Braid group associated with $\cmA_\sigma$. 
For $j\in J$, we define 
\begin{align} \label{Eq: folding braid generators}
\fdr^\sigma_{j} := \prod_{i \in \orb_\sigma(j)  } r_i \in \bg. 
\end{align}
Then the isomorphism $\hP_\sigma $ is compatible with the braid group actions, i.e., 
$$
\hP_\sigma \left(r_j'(\cb)\right) = \fdr^\sigma_j \left(\hP_\sigma (\cb) \right) \qquad \text{ for any }j\in J.
$$
\ee
\end{prop}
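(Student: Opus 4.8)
The plan is to show that $\hP_\sigma$ is nothing but the componentwise application of Kashiwara's similarity map $\nP_\sigma\colon B_\sigma(\infty)\to B(\infty)$, and then to reduce both assertions to statements at the level of $B(\infty)$. Concretely, I would define $Q\colon\hBsi\to\hBi$ by $Q\bigl((b_k)_{k\in\Z}\bigr):=(\nP_\sigma(b_k))_{k\in\Z}$ (well defined since $\nP_\sigma(\hv)=\hv$) and verify that $Q$ satisfies the relations characterizing $\hP_\sigma$ in Corollary~\ref{Cor: folding for hBi}, so that $Q=\hP_\sigma$ by uniqueness. The key inputs are that for $i\in\orb_\sigma(j)$ one has $\ep_i(\nP_\sigma(b))=\ep_j(b)$ and $\eps_i(\nP_\sigma(b))=\eps_j(b)$, with these values constant along the orbit (because $\nP_\sigma(b)$ is $\sigma$-fixed and $\sigma$ acts transitively on $\orb_\sigma(j)$). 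Consequently $\hep_{i,k}=\hep_{j,k}$ is constant along the orbit, and since the roots $\{\al_i\}_{i\in\orb_\sigma(j)}$ are mutually orthogonal, the operators $\tF_{i,k}$ commute and each acts on the \emph{same} single tensor slot with the \emph{same} sign; hence $\prod_{i\in\orb_\sigma(j)}\tF_{i,k}$ applied to $Q(\cb)$ realizes $\prod_i\tf_i$ (resp.\ $\prod_i\tes_i$) on that component, which by the folding relations of Kashiwara's theorem equals $\nP_\sigma(\tf_j\,\cdot)$ (resp.\ $\nP_\sigma(\tes_j\,\cdot)$) and so matches $Q(\tF_{j,k}(\cb))$; the argument for $\tE_{j,k}$ is identical.

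With $Q=\hP_\sigma$, part (i) is then immediate. The operator compatibility is exactly Corollary~\ref{Cor: folding for hBi} in the case $m_i=1$ (so that $\prod_{i\in\orb_\sigma(j)}\tF_{i,k}=\fF_{j,k}^\sigma$ and likewise for $\fE_{j,k}^\sigma$). For the image, I would use that $\nP_\sigma$ is a bijection onto the fixed-point set $B(\infty)^\sigma$ (\cite[Theorem~5.1]{K96}); since $\sigma$ acts on $\hBi$ componentwise, an element $(b_k)_k$ is $\sigma$-fixed iff every $b_k\in B(\infty)^\sigma$, whence $\Img(\hP_\sigma)=\{(b_k)_k\mid b_k\in\Img(\nP_\sigma)\}=\hBi^\sigma$. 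Thus $\hP_\sigma$ is a bijection onto $\hBi^\sigma$, i.e.\ an isomorphism of extended crystals.

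For part (ii), write $\bR_j^\sigma$ and $\tcT_j^\sigma$ for the operators $\bR_j$ and $\tcT_j$ built from the crystal $B_\sigma(\infty)$ of the folded datum; the goal is $\hP_\sigma\circ\bR_j^\sigma=\bigl(\prod_{i\in\orb_\sigma(j)}\bR_i\bigr)\circ\hP_\sigma$. First note the right-hand product is well defined and order-independent, since $m(i,i')=2$ for distinct $i,i'\in\orb_\sigma(j)$ forces $\bR_i\bR_{i'}=\bR_{i'}\bR_i$ by Theorem~\ref{Thm: main1}. The heart of the matter is the $B(\infty)$-level identity
\[
\nP_\sigma\circ\tcT^\sigma_j=\Bigl(\prod_{i\in\orb_\sigma(j)}\tcT_i\Bigr)\circ\nP_\sigma ,
\]
which I would establish by splitting $\tcT_j^\sigma=\cT_j\circ\bigl(b\mapsto\tem_j(b)\bigr)$: the projection part folds correctly because $\nP_\sigma(\tem_j b)=\prod_{i\in\orb_\sigma(j)}\tem_i(\nP_\sigma b)$ (iterate the folding relation for $\te_j$ and use $\ep_i(\nP_\sigma b)=\ep_j(b)$), while the reflection part $\nP_\sigma(\cT_j b)=\prod_{i\in\orb_\sigma(j)}\cT_i(\nP_\sigma b)$ follows from Lusztig's compatibility of the braid symmetries $\lT_i$ with the diagram automorphism (\cite[Section~14.4]{Lu93}) together with the $*$-similarity of \cite{NaSa03,NaSa05}; here orthogonality of the orbit indices lets the exponents $\vphi_j^*,\ep_j^*$ pass over to the individual $\cT_i$ and lets the commuting $\cT_i$ be reassembled as a product. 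Granting this identity, Step~1 shows that the $k$-th component of $\hP_\sigma(\bR_j^\sigma(\cb))$ equals $\prod_{i\in\orb_\sigma(j)}\tfss{\ep_i(c_{k-1})}_i\prod_{i\in\orb_\sigma(j)}\tcT_i(c_k)$, where $c_k=\nP_\sigma(b_k)$ and $\ep_j(b_{k-1})=\ep_i(c_{k-1})$. On the other side, applying the commuting $\bR_i$ one at a time to $\hP_\sigma(\cb)=(c_k)_k$ and using that the $i$-operations assembled into $\bR_i$ commute with the $\te_{i'},\tes_{i'}$ of the orthogonal orbit-mates (so they preserve $\ep_{i'}$ and the bookkeeping of one factor never perturbs another) yields exactly the same expression; comparing proves (ii).

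I expect the genuine obstacle to be the reflection identity in the third paragraph, namely that the Saito reflection $\tcT_j^\sigma$ of the folded datum folds onto the product $\prod_{i\in\orb_\sigma(j)}\tcT_i$. This requires the $*$-version of the similarity theorem and a careful check that the starred operators $\tess{}_i,\tfss{}_i$ commute with the unstarred operators of the orthogonal orbit-mates; once these commutations and the transfer of the $\ep^*,\vphi^*$ exponents are in hand, everything else is routine bookkeeping made possible by orthogonality and by the componentwise description of $\hP_\sigma$ from Step~1.
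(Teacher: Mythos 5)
Your proposal is correct and follows exactly the route the paper indicates: the paper offers no written proof beyond the pointer to Lusztig's \cite[Section~14.4]{Lu93} and Corollary~\ref{Cor: folding for hBi}, and your argument supplies precisely those two ingredients, identifying $\hP_\sigma$ with the componentwise $\nP_\sigma$ and using orbit-orthogonality to keep the sign of $\hep_{i,k}$ and the exponents $\ep_i,\eps_i,\vphi_i^*$ stable across the orbit. As a minor simplification of the step you flag as the genuine obstacle, the identity $\nP_\sigma(\cT_j b)=\prod_{i\in\orb_\sigma(j)}\cT_i(\nP_\sigma b)$ also follows directly from the defining formula $\cT_j=\tf_j^{\vphi_j^*}\te_j^{*\ep_j^*}$ together with the four similarity relations, the equality $\langle h_i,\wt(\nP_\sigma b)\rangle=\langle\tch_j,\wt(b)\rangle$, and orthogonality within the orbit, so Lusztig's result is not strictly needed there.
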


\vskip 2em

\section{Hernandez-Leclerc categories} \label{Sec: HL cat}

We keep the notation given in Section \ref{subsec: HL}. 
Let $\ddD := \{ \Rt_i \}_{i\in \If} $ be a complete duality datum of $\catCO$ and we write $\cmAf = (a^{\fin}_{i,j})_{i,j\in \If}$.
It is announced in \cite{KKOP21A} that 
\bni
\item  there exists an action of the braid
group $\bg_\cmAf$ associated with the Cartan matrix $\cmAf$ on the quantum Grothendieck ring of the Hernandez-Leclerc category $\catCO$, 
\item there exists a family of monoidal autofunctors $\mathscr{S}_i$ on the localization  $\T_N$ such that the autofunctors induce the braid group action in (i) at the Grothendieck ring level.
\ee
Note that  $\T_N$ can be understood as a graded version of the Hernandez-Leclerc category $\catCO$ for affine type $A_{N-1}$.   
It is also conjectured in \cite[Section 5]{KKOP21A} that, for an arbitrary quantum affine algebra $U_q'(\g)$, there exist monoidal autofunctors on the Hernandez-Leclerc category $\catCO$ with the same properties as the autofunctors $\mathscr{S}_i$ on $\T_N$. In this section, we assume that this conjecture is true. Namely, we assume that the following conjecture holds.

\Conj \label{Conj: R_i}
There exist exact monoidal autofunctors $\{ \fR_i \}_{i\in \If}$ on $\catCO$ satisfying the following properties: for any $i,j \in \If$, 
\bna
\item $\fR_i$ sends simple modules to simple modules,
\item $ \fR_i(\Rt_i) \simeq \dual(\Rt_i)$, 
\item  $ \fR_i \circ \dual \simeq \dual \circ  \fR_i$, 
\item if $ a^{\fin}_{i,j} = 0$, then $\fR_i \circ \fR_j \simeq \fR_j \circ \fR_i $, 
\item if $ a^{\fin}_{i,j} = -1$, then $\fR_i \circ \fR_j \circ \fR_i \simeq \fR_j \circ \fR_i \circ \fR_j $, 
\item \label{conj: diag} at the Grothendieck ring level, the following diagram commutes,
$$
\xymatrix{
K({^{i}R_{\gf}}\gmod ) \ar[rr]^{ \  \quad  [\F_\ddD]} \ar[d]_{[\fT_i]}  \ar[drr]^{[\F_{\Refn_i (\ddD)}]}    && K(\catCO) \ar[d]^{[\fR_i]} \\
K({_{i}R_{\gf}}\gmod ) \ar[rr]^{ \  \quad  [\F_\ddD]} && K(\catCO),
}
$$
\ee
where we use the following notations:
\begin{itemize}
\item  $ {^{i}R_{\gf}}\gmod$ (resp.\ $ {_{i}R_{\gf}}\gmod$) is the full subcategory of $R_{\gf}\gmod$ consisting of graded modules $M$ with $E_i M = 0$ (resp.\ $E_i^* M=0$),
\item 
$\fT_i : {^{i}R_{\gf}}\gmod \rightarrow {_{i}R_{\gf}}\gmod$ is the reflection functor due to S. Kato \cite{Kato14}, 
\item $\Refn_i (\ddD)$ is the duality datum obtained from $\ddD$ by applying the reflection $\Refn_i$ introduced in \cite[Section 5.3]{KKOP20A}. 
\end{itemize}
\enconj
The above properties (a)-(f) of Conjecture \ref{Conj: R_i} come from \cite[Theorem 2.4, Theorem 3.1, Proposition 3.2 and Section 5]{KKOP21A}. 
Note that $ \fT_i$ is the same as $\mathbb{T}_i^{-1}$ in \cite[Theorem 2.4]{KKOP21A} and 
that $\fT_i$ is compatible with the Saito crystal reflection $ \cT_i$ (see \cite[Theorem 3.6]{Kato14}). 
Since $ \fT_i$, $ \fR_i$ and $\F_\ddD$ send simple modules to simple modules, it follows from \ref{conj: diag} that 
\begin{align*}
\F_\ddD \circ \fT_i (M) \simeq \fR_i \circ \F_\ddD  (M) \qquad \text{ for a simple module $M$ in ${^{i}R_{\gf}}\gmod$.}
\end{align*}
In particular, if we write $ \F_\ddD(M) \simeq  \cL_\ddD(b) $ for some $b\in  {_{i}}B_{\gf}(\infty)$ (see \eqref{Eq: Fd LD} for the definition of $\cL_\ddD$), then we have 
\begin{align} \label{Eq: comm}
	\cL_\ddD (\cT_i ( b  )) \simeq \fR_i  (\cL_\ddD(b)),
\end{align}
where $ \cT_i$ is the Saito crystal reflection. 

\begin{prop} \label{Prop: braid for CgO}
We assume that Conjecture \ref{Conj: R_i} is true. Then we have 
$$
\Phi_\ddD ( \bR_i (\cb) ) = \fR_i ( \Phi_\ddD(\cb) )  \qquad \text{ for any $\cb \in \cBg[\gf]$ and $i\in \If$,}
$$
where $\Phi_\ddD \col  \cBg[\gf]  \buildrel \sim \over \longrightarrow \sBD(\g)$ is the extended crystal isomorphism given in Section \ref{subsec: HL}, and $\bR_i$ is the braid group action on $\cBg[\gf] $ defined in Section \ref{Sec: braid group actions}.
\end{prop}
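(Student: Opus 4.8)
The plan is to compute both sides as the simple head of one and the same ordered tensor product, and then to match them by reorganizing the copies of $\Rt_i$. Fix $\cb=(b_k)_{k\in\Z}\in\cBg[\gf]$; since $b_k=\hv$ for all but finitely many $k$, every product below is finite. By definition $\Phi_\ddD(\cb)=\hd\bigl(\boten{k}\dual^k L_k\bigr)$ with $L_k=\cLD(b_k)$. Because $\fR_i$ is exact and sends simples to simples (Conjecture \ref{Conj: R_i}(a)) it commutes with the formation of the simple heads at issue, and by Conjecture \ref{Conj: R_i}(c) it commutes with $\dual$; hence
$$
\fR_i(\Phi_\ddD(\cb))=\hd\Bigl(\boten{k}\dual^k\,\fR_i(L_k)\Bigr).
$$
So it suffices to identify this with $\Phi_\ddD(\bR_i(\cb))=\hd\bigl(\boten{k}\dual^k\cLD(b_k')\bigr)$, where $b_k'=\tfss{\ep_i(b_{k-1})}_i\bigl(\tcT_i(b_k)\bigr)$.

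First I record the two rules translating $\tf_i$ and $\tfs_i$ into convolutions with $\Rt_i$. Comparing the categorical operators $\ttF_{i,0}(M)=\Rt_i\htens M$ and $\ttE_{i,-1}(M)=M\htens\Rt_i$ with the action of $\tF_{i,0}$ and $\tE_{i,-1}$ on a one-component element of $\cBg[\gf]$ gives $\cLD(\tf_i b)=\hd(\Rt_i\tens\cLD(b))$ and $\cLD(\tfs_i b)=\hd(\cLD(b)\tens\Rt_i)$. Iterating along a clean $i$-string yields $\cLD(\tf_i^{c}b)=\hd(\Rt_i^{\tens c}\tens\cLD(b))$ when $\ep_i(b)=0$, and $\cLD(\tfss{c}_i b)=\hd(\cLD(b)\tens\Rt_i^{\tens c})$ when $\eps_i(b)=0$. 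Now set $c_k:=\ep_i(b_k)$ and $b_k^0:=\tem_i(b_k)=\te_i^{c_k}(b_k)\in\iB$, so that $\tcT_i(b_k)=\cT_i(b_k^0)\in\Bi$ and $L_k=\hd\bigl(\Rt_i^{\tens c_k}\tens\cLD(b_k^0)\bigr)$. Applying $\fR_i$ and using both $\fR_i(\dual^k\Rt_i)\simeq\dual^{k+1}\Rt_i$ (Conjecture \ref{Conj: R_i}(b),(c)) and the compatibility \eqref{Eq: comm}, which gives $\fR_i(\cLD(b_k^0))\simeq\cLD(\cT_i(b_k^0))$ since $b_k^0\in\iB$, the $k$-th slot becomes
$$
\dual^k\fR_i(L_k)=\hd\bigl((\dual^{k+1}\Rt_i)^{\tens c_k}\tens\dual^k\cLD(\cT_i(b_k^0))\bigr).
$$

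The remaining point is pure bookkeeping. In the full ordered product the block $(\dual^{k+1}\Rt_i)^{\tens c_k}$ coming from slot $k$ sits, after the shift $\fR_i(\Rt_i)\simeq\dual\Rt_i$, immediately to the right of the core $\dual^{k+1}\cLD(\cT_i b_{k+1}^0)$ of the slot to its left, so no factors need to be transposed. Re-parenthesizing so that the $\Rt_i$-block of power $k$ is grouped on the right of the core of power $k$ produces, for each $k$, the factor $\dual^k\cLD(\cT_i b_k^0)\tens(\dual^k\Rt_i)^{\tens c_{k-1}}$; since $\cT_i(b_k^0)\in\Bi$ has $\eps_i=0$, the right-convolution rule identifies its head with $\dual^k\cLD\bigl(\tfss{c_{k-1}}_i\cT_i(b_k^0)\bigr)=\dual^k\cLD(b_k')$, which is exactly the $k$-th factor of $\Phi_\ddD(\bR_i(\cb))$. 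Thus $\fR_i(\Phi_\ddD(\cb))=\Phi_\ddD(\bR_i(\cb))$, as claimed.

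The main obstacle — and the only genuinely categorical input beyond Conjecture \ref{Conj: R_i} — is the justification of the two manipulations with heads: expanding each $\fR_i(L_k)$ as the head of an $\Rt_i$-block tensored with a core, and then replacing each re-parenthesized block by its head inside the big product. Both rest on the assertion that the head of an ordered tensor product depends only on the heads of a consecutive grouping of its factors, i.e. on the normality of the sequences involved. It is precisely the conditions $\ep_i(b_k^0)=0$ and $\eps_i(\cT_i b_k^0)=0$, forced by $\cT_i\col\iB\to\Bi$, that leave the left $i$-string of $b_k^0$ and the right $i$-string of $\cT_i(b_k^0)$ unobstructed, so that the copies of $\Rt_i$ realize $\tf_i^{c_k}$ and $\tfss{c_{k-1}}_i$ faithfully and the groupings are normal; here the root-module property of $\Rt_i$ and the standard $\mathbf r$-matrix machinery in $\catCO$ supply the normality. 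The neighbour coupling in the definition of $\bR_i$, namely the exponent $\ep_i(b_{k-1})$ attached to slot $k$, is then exactly the image under $\fR_i(\Rt_i)\simeq\dual\Rt_i$ of the $\Rt_i$-block of slot $k-1$ sliding one dual-power to the left.
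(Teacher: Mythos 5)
Your proposal is correct and follows essentially the same route as the paper's proof: decompose each $L_k$ as the head of $\Rt_i^{\tens \ep_i(b_k)}\tens\cLD(\tem_i(b_k))$, apply $\fR_i$ using properties (a)--(c) of Conjecture \ref{Conj: R_i} together with \eqref{Eq: comm}, slide each $\Rt_i$-block one dual power to the left so it regroups with the neighbouring core, and realize $\tfss{\ep_i(b_{k-1})}_i$ by right convolution with $\Rt_i^{\tens \ep_i(b_{k-1})}$. The normality issues you flag at the end are exactly what the paper delegates to the cited lemmas of \cite{KP22}.
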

\begin{proof}
Let $\cb = (b_k)_{k\in \Z} \in \cBg[\gf]$ and let $L := \Phi_\ddD(\cb)$.  
By the definition of $\Phi_\ddD$, we have 
\begin{align*}
	\Phi_\ddD(\cb) = \hd \left( \bigotimes_{k=+\infty }^{-\infty} \dual^k L_k \right) =
	 \hd ( \cdots \tens \dual^2 L_2 \tens \dual L_1 \tens  L_0 \tens \dual^{-1} L_{-1} \tens \cdots ), 
\end{align*}
where $L_k := \cL_\ddD(b_k) \in \catCD$ for $ k\in \Z$ (see \cite[Lemma 3.2]{KP22}). 
For any $k\in \Z$, let 
$$
 a_k := \ep_i(b_k) \quad\text{ and }\quad   L_k' := \cL_\ddD( \te_i^{a_k} ( b_k)).
 $$
We then have $L_k \simeq \Rt_i^{\tens a_i} \hconv L_k'$  and, by \eqref{Eq: comm}, 
\begin{align} \label{Eq: FT}
\fR_i  (L_k') \simeq  \cL_\ddD (\cT_i ( \te_i^{a_k} ( b_k)  )) = \cL_\ddD (\tcT_i (  b_k  )). 
\end{align}

By \cite[Lemma 2.6, Lemma 3.2 and Lemma 5.7]{KP22}, \eqref{Eq: FT} and the properties of $\fR_i$, we have 
\begin{align*}
\fR_i(L) &=  \fR_i \left( \hd \left( \bigotimes_{k=+\infty }^{-\infty} \dual^k  \Rt_i^{\tens a_k} \tens \dual^k L_k' \right) \right) \\
&\simeq     \hd \left( \bigotimes_{k=+\infty }^{-\infty} \dual^k \left( \fR_i( \Rt_i )^{\tens a_k} \right) \tens \dual^k \left(\fR_i( L_k') \right) \right)  \\ 
&\simeq     \hd \left( \bigotimes_{k=+\infty }^{-\infty} \dual^{k+1} \left(   \Rt_i ^{\tens a_k} \right) \tens \dual^k \left( \cL_\ddD (\tcT_i (   b_k  )) \right) \right)   \\
&\simeq     \hd \left( \bigotimes_{k=+\infty }^{-\infty}  \dual^k \left( \cL_\ddD (\tcT_i  ( b_k) ) \right) \tens \dual^{k} \left(   \Rt_i ^{\tens a_{k-1}} \right) \right)  \\
&\simeq     \hd \left( \bigotimes_{k=+\infty }^{-\infty}  \dual^k \left( \cL_\ddD (\tcT_i   ( b_k)  )  \htens    \Rt_i ^{\tens a_{k-1}} \right) \right)  \\
&\simeq     \hd \left( \bigotimes_{k=+\infty }^{-\infty}  \dual^k \left( \cL_\ddD (  \tfss{ a_{k-1} }_i  \tcT_i ( b_k  ))   \right) \right)  \\
&\simeq  \Phi_\ddD ( \bR_i (\cb) ).
\end{align*}		
\end{proof}

\begin{example}
We keep all notations given in Example \ref{Ex: ECI} and Example \ref{Ex: braid}. 
Recall the complete duality datum  $\ddD = \{  \Rt_1, \Rt_2   \} $, where $\Rt_1 := V(\varpi_1) $ and $\Rt_2 := V(\varpi_1)_{(-q)^2} $.
We assume that Conjecture \ref{Conj: R_i} is true.

\bni
\item 
We set 
$$
\bfi = (1,2,1,2,1,2, \ldots ),
$$
and write $\bfi = (i_k)_{k \in \Z_{>0}}$. Mimicking the construction to make PBW vectors in a quantum group, we define 
$$
V_k := \fR_{i_1}  \fR_{i_2} \cdots \fR_{i_{k-1}} (\Rt_{i_k}) \qquad \text{ for any $k \in \Z_{>0}$.}
$$ 
By Proposition \ref{Prop: braid for CgO}, we have 
\begin{align} \label{eq: P}
	P := \left\{  V_k \mid k\in \Z_{>0} \right\} = 	\left\{  \Phi_\ddD (\bR_{i_1}  \bR_{i_2} \cdots \bR_{i_{k-1}} (  \tF_{i_k,0} (\one))  \mid k\in \Z_{>0} \right\}.
\end{align}	
Let
$\hbfm_2 = ( \ldots,  \emptyset, \underline{[2]}, \emptyset, \ldots   )$,  
$\hbfm_{12} = ( \ldots,  \emptyset, \underline{[12]}, \emptyset, \ldots   ),$ and  
$\hbfm_1 = ( \ldots,  \emptyset, \underline{[1]}, \emptyset, \ldots )$.
Note that 
$$
\Phi_\ddD ( \hbfm_2) = V(\varpi_1)_{(-q)^2}, \quad \Phi_\ddD ( \hbfm_{12}) = V(\varpi_2)_{-q}, \quad 
\Phi_\ddD ( \hbfm_1) = V(\varpi_1).
$$
By the same argument given in Example \ref{Ex: braid} (i), it follows from \eqref{eq: P} that
\begin{align*}
P &= \left\{  \Phi_\ddD (\cdual^t ( \hbfm_2) ), \ \Phi_\ddD (\cdual^t ( \hbfm_{12}) ), \ \Phi_\ddD (\cdual^t ( \hbfm_1) )  \mid t \in \Z_{\ge0} \right\} \\
&= 	\left\{  V(\varpi_1)_{(-q)^a},\  V(\varpi_2)_{(-q)^b} \mid a \in 2\Z_{\ge0}, \ b \in 2\Z_{\ge0}+1 \right\}.
\end{align*}	
We remark that the set $P$ can be viewed as the set of fundamental modules contained in the Hernandez-Leclerc category $\catC_\g^-$ introduced in \cite{HL16}. 
From the viewpoint of the PBW theory given in \cite{KKOP20A},  the set $P$ is also understood as the set of \emph{affine cuspidal modules} in $\catC_\g^-$ corresponding to $\ddD$ and the reduced expression $s_1s_2s_1$.  

\item We recall the element $\hbfm$ given in Example  \ref{Ex: braid} (ii).  Let $\la := \gamma(\hbfm) \in \crBB{2}$, where $\gamma$ is given in \eqref{eq: gamma}.
 One can write 
$
\la = \la_1 + \la_0 + \la_{-1}, 
$ 
where 
\begin{align*}
\la_1 &:= \gamma_1(\hbfm_1) = 2  (2,5) + 3 (1,4) + 4(2,3), \\ 
\la_0 &:= \gamma_0(\hbfm_0) =  (1,2) + (2,1) + 2(1,0), \\
\la_{-1} &:= \gamma_{-1}(\hbfm_{-1}) = 3  (2,-1) + 2(1,-2) + (2,-3)
\end{align*}
By Proposition \ref{Prop: braid for CgO}, we have 
$$
\fR_{1} (  V(\la) ) \simeq \fR_{1} (  \Phi_\ddD(\hbfm) ) \simeq \Phi_\ddD( \bR_1(\hbfm) ) \simeq V(\la'),
$$
where $ \la' =  \gamma (\bR_1(\hbfm) )$. Example  \ref{Ex: braid} (ii) says that 
$
\la' = \la_2' + \la_1' + \la_0' + \la_{-1}', 
$ 
where 
\begin{align*}
	\la_2' &:= \gamma_1(\hbfm_2') = 5  (1,6), \\ 
	\la_1' &:= \gamma_1(\hbfm_1') = 2  (2,5) + 3 (1,4) + 2(2,3), \\ 
	\la_0' &:= \gamma_0(\hbfm_0') =  (1,2) + (2,1) + 2(1,0), \\
	\la_{-1}' &:= \gamma_{-1}(\hbfm_{-1}') = (2,-1) + 4(1,-2). 
\end{align*}

\ee

\end{example}

\vskip 2em

\end{document}